\documentclass{article}
\usepackage{authblk}
\pdfminorversion=7 

\usepackage{natbib}
\usepackage{amsthm, amsmath, amsfonts, amssymb}
\usepackage{graphicx} 

\usepackage{epsfig}

\usepackage{makecell}
\usepackage{array,multirow}

\usepackage{color}
\usepackage[normalem]{ulem}
\usepackage{cancel}

\usepackage{hyperref}

\usepackage[ruled, vlined, linesnumbered]{algorithm2e} 
\numberwithin{equation}{section}

\DeclareMathAlphabet{\pazocal}{OMS}{zplm}{m}{n}
\DeclareMathOperator*{\argmin}{arg\,min}
\DeclareMathOperator*{\argmax}{arg\,max}
\DeclareMathOperator*{\essup}{ess\,sup}

\def\Rset{\ensuremath{\mathbb R}} 
\def\Nset{\ensuremath{\mathbb N}} 
\def\Gset{\ensuremath{\mathcal G}} 
\def\S{\ensuremath{\mathcal S}} 
\def\A{\ensuremath{\mathcal A}} 
\def\Br{\ensuremath{\mathcal B}} 
\def\I{\ensuremath{\mathcal I}} 
\def\F{\ensuremath{\mathcal F}} 
\def\C{\ensuremath{\mathcal C}} 
\def\P{\ensuremath{\pazocal P}} 
\def\L{\ensuremath{\mathcal L}} 
\def\D{\ensuremath{\mathbb D}} 

\def\M{\ensuremath{\mathcal M}} 
\def\Pr{\ensuremath{\mathbb P}} 
\def\Q{\ensuremath{\mathbb Q}} 
\def\E{\ensuremath{\mathbb E}} 
\def\V{\ensuremath{\mathbb V}} 
\def\W{\ensuremath{\mathbb W}} 
\def\B{\ensuremath{\mathbb B}} 
\def\Ops{\ensuremath{O_{\Pr^*}}} 

\def\X{\ensuremath{\mathbf X}} 
\def\x{\ensuremath{\mathbf x}} 

\def\r{\ensuremath{\mathbf r}} 
\def\c{\ensuremath{\mathbf c}} 
\def\d{\ensuremath{\mathbf d}} 

\def\ie{\emph{i.e.,} }


\def\mnsr{M(\C_n^s,\r)}
\def\mns{M(\C_n^s)}

\usepackage{array} 

\newcounter{hypocounter}
\renewcommand\thehypocounter{(H\arabic{hypocounter})} 
\setcounter{hypocounter}{0}
\newenvironment{hypo}{\refstepcounter{hypocounter}\begin{tabular}{m{.85\linewidth}l}}{&\thehypocounter\end{tabular}}

\newcommand\ind[1]{\mathbf{1}_{#1}}
\providecommand{\keywords}[1]{\textbf{\textit{Keywords: }} #1}

\theoremstyle{plain}
\newtheorem{definition}{Definition}[section]
\newtheorem{proposition}{Proposition}[section]
\newtheorem{theorem}{Theorem}[section]

\newtheorem{corollary}{Corollary}[section]
\newtheorem{example}{Example}

\theoremstyle{remark}
\newtheorem{remark}{Remark}

\title{Consistent Regression using Data-Dependent Coverings}
\author{Vincent Margot, Jean-Patrick Baudry, Frederic Guilloux, Olivier Wintenberger}
\affil{Sorbonne Universit\'e, CNRS, LPSM, F-75005 Paris, France}

\date{}

\begin{document}
	\maketitle
	\begin{abstract}
		We introduce a novel method to generate \emph{interpretable} regression function estimators. The idea is based on   \emph{data-dependent coverings}. The aim is to extract from the data a covering of the feature space instead of a partition. The estimator predicts the empirical conditional expectation over the cells of the partitions generated from the coverings. Thus, such estimator has the same form as those issued from \emph{data-dependent partitioning} algorithms. We give sufficient conditions to ensure the consistency, avoiding the sufficient condition of shrinkage of the cells that appears in the former literature. Doing so, we reduce the number of covering elements. We show that such coverings are \emph{interpretable} and each element of the covering is tagged as \emph{significant} or \emph{insignificant}.
		
		The proof of the consistency is based on a control of the error of the empirical estimation of conditional expectations which is interesting on its own. 
	\end{abstract}
	
	\keywords{Consistency, Nonparametric regression, Rule-based algorithm, Data-dependent covering, Interpretable learning.}

\section{Introduction}
We consider the following regression setting: let $\left(\X, Y\right)$ be a pair of random variables in $\Rset^d\times\Rset$ of unknown distribution $\Q$ such that
\begin{equation*}\label{model}
Y = g^*(\X) + Z,
\end{equation*}
where $\E[Z] = 0$, $\V(Z) = \sigma^2 < \infty$ and $g^*$ is a measurable function from $\Rset^d$ to $\Rset$.\\
We make the following common assumptions:
\begin{itemize}
	\item 
	\begin{hypo}\label{H:indep}
		$Z$ is independent of $\X$;
	\end{hypo}\\
	\item
	\begin{hypo}\label{H:bounded}
		$Y$ is bounded: $\Q(\S)=1$ where $\S=\Rset^d \times [-L, L]$ for some unknown $L>0$.
	\end{hypo}
\end{itemize}
The accuracy of $g:\Rset^d \to \Rset$ such that $\E[g(\X)^2]<\infty$ is measured by its quadratic risk defined as
\begin{equation*}\label{risk}
	\L \left( g\right) := \E \left[( g(\X)-Y)^2\right]\,.
\end{equation*}
Then we have
\begin{eqnarray*}
	g^*(\X) = \E\left[Y | \X\right] = \argmin \limits_{g} \L \left( g\right)  \text{ a.s},
\end{eqnarray*}
where the $\argmin$ ranges over all the measurable functions $g$ with $\E[g(\X)^2]<\infty$.

Given a sample $\mathbf D_n = \left( (\X_1,Y_1),\dots, (\X_n, Y_n) \right)$, we aim at predicting $Y$ conditionally on $\X$. The observations $\left(\X_i,Y_i\right)$ are assumed  independent and identically distributed (i.i.d.) from the distribution $\Q$. 

Let us denote the empirical measure for any $\r \subseteq \Rset^d$ and $I \subseteq \Rset$ 
\begin{equation*}
	\Q_n(\r\times I) := \frac 1n \sum_{i=1}^n \ind{\X_i \in \r} \ind{Y_i \in I}.
\end{equation*}
	For simplicity of notation, for any $\r \subseteq \Rset^d$ we write $\Q_n(\r)$ instead of $\Q_n(\r \times \Rset)$ as well as $\Q(\r)$ instead of $\Q(\r \times \Rset)$.

We consider a set of functions $\Gset_n$. Any $\tilde g_n \in \argmin_{g \in \Gset_n} \L(g)$ minimizes the risk over $\Gset_n$ but is not an estimator since it depends on the knowledge of $\Q$.  
Following the Empirical Risk Minimization (ERM) principle \citep[Section~1.5]{Vapnikbook}, we define the empirical risk and an empirical risk minimizer over $\Gset_n$ as, respectively,
\begin{equation}\label{argmin}
	\L_n(g) := \frac1n\sum_{i=1}^{n} \left( g(\X_i) - Y_i \right)^2 \text{ and } g_n \in \argmin_{g \in \Gset_n} \L_n(g).
\end{equation}

The aim of this paper is to provide \emph{interpretable}\footnote{A formal definition of interpretability convenient to our framework is proposed and discussed in Section~\ref{sec:interpretability}.} learning algorithms that generate weakly consistent empirical risk minimizers $g_n$, i.e. such that their excess of risk $\ell \left( g^*, g_n\right)$ fulfill
\begin{equation*}
	\ell \left( g^*, g_n\right) := \L(g_n) - \L(g^*) = \E[(g_n(\X)-g^*(\X))^2]=o_\Pr(1)\,.
\end{equation*}
Note that $\Pr$ refers to an underlying probability measure of reference that is unique (on the contrary to $\Q$ that is arbitrary) and that $\E$ corresponds to the expectation under $\Pr$. 

\subsection{Rule-based algorithms using partitions and quasi-coverings}\label{sec:rule-based}

In this paper, we consider algorithms generating interpretable models that are rule-based, such as \emph{CART} \citep{CART}, \emph{ID3} \citep{Quinlan86}, \emph{C4.5} \citep{Quinlan93}, \emph{FORS} \citep{Karalivc97}, \emph{M5 Rules} \citep{Holmes99}. In these models, the estimator is explained by the realization of a simple condition, an \emph{If-Then} statement of the form:
\begin{eqnarray}\label{rule_form}
	\text{IF} && (\X\in c_1) \text{ And }(\X\in c_2) \text{ And } \dots \text{ And } (\X\in c_k)\\
	\text{THEN} &&g_n(\X)=p\nonumber
\end{eqnarray}
where  $p\in \Rset$ and each $c_i\subseteq\Rset^d$ is expressed in its simplest shape. For instance, the subset $[a_1, b_1] \times [a_2, b_2] \times \Rset^{d-2}$ will be expressed as two $c_i$'s: $$c_1 = [a_1, b_1] \times \Rset^{d-1} \text{ and } c_2 = \Rset \times [a_2, b_2] \times \Rset^{d-2}.$$

\color{black} The \emph{If} part, called the \emph{condition} of the rule, or simply the rule, is composed of the conjunction of $k$ basic tests which forms a subset of $\Rset^d$ and $k$ is called the \emph{length} of the rule. The \emph{Then} part, called the \emph{conclusion} of the rule, is the estimated value when the rule is \emph{activated} (i.e., when the condition in the \emph{If} part is satisfied). The rules are easy to understand and allow an interpretable decision process when $k$ is small. For a review of the best-known algorithms for descriptive and predictive rule learning, see \cite{Zhao03} and \cite{Furnkranz15}. 

Formally, models generated by such algorithms are defined by a corresponding \emph{data-dependent partition} $\P_n$ of $\Rset^d$. Each element of the partition is named a \emph{cell}. Let us define for any $\r\subseteq\Rset^d$ such that $\Q(\r)>0$ and for any $h$ measurable
\begin{equation*}
	\E[h(Y) \mid \X \in \r]:= \frac{\E[h(Y)\ind{\X\in \r}]}{\Pr(\X\in\r)}
\end{equation*}
and
\begin{equation*}
	\V(Y \mid \X \in \r):= \E \left[ Y^2 \mid \X \in \r \right] - \E \left[ Y \mid \X \in \r\right]^2
\end{equation*}
and their empirical counterparts for any $\r\subseteq\Rset^d$ such that $\Q_n(\r)>0$:
\begin{equation*}
 	\E_n[h(Y) \mid \X \in \r]:= \frac{\sum_{i=1}^{n} h(Y_i) \ind{\X_i \in \r}}{\sum_{i=1}^{n} \ind{\X_i \in \r}}
\end{equation*}
and
\begin{equation*}
	\V_n(Y \mid \X \in \r):= \E_n \left[ Y^2 \mid \X \in \r \right] - \E_n \left[ Y \mid \X \in \r\right]^2.
\end{equation*}


Any such algorithm following the ERM principle associates to $\P_n$ an estimator such that
\begin{equation}\label{estimator_partition}
	g_n \ind{ \bigcup \{ A\in\P_n : \Q_n(A)>0 \}} = \mathop{\sum_{A\in\P_n}}_{\Q_n(A)>0} \E_n[Y \mid \X \in A] \ind A
\end{equation}
which minimizes the empirical risk among the class of piecewise constant functions over $\P_n$.

Those algorithms use the dataset $\mathbf D_n$ twice: The partition $\P_n = \P_n(\mathbf D_n)$ is constructed according to the dataset, then the dataset is used again to calculate the value of $g_n$ on each element of $\P_n$ as in \eqref{estimator_partition}.  

In this paper, we present a novel family of algorithms by relaxing the condition that the rules constitute a partition of $\Rset^d$. We  consider collections of sets $\C_n = \C_n(\mathbf D_n)$ which asymptotically cover $\Rset^d$ (see \ref{H:collectempcoverage}). We call such a collection a quasi-covering, and a quasi-partition if the sets do not intersect, which we do not need to assume in general. Such a quasi-covering is turned into a quasi-partition $\P(\C_n)$ using the definition below. Then according to the ERM principle we define a quasi-covering estimator $g_n$ by \eqref{estimator_partition} where $\P_n=\P(\C_n)$. 	The main interest of quasi-covering estimators compared to partitioning estimators is its interpretability. This will be discussed in Section~\ref{sec:interpretability}.

\begin{definition}
	Let $\C$ be a finite collection of sets of $\Rset^d$ and let $\mathbf c=\bigcup\limits_{\r\in\C}\r$. We define the activation function as 
	$$ \varphi_\C: \Rset^d \mapsto 2 ^{\C};\qquad \varphi_\C(\x)=\{\r\in \C: \x\in \r\}. $$
	Then $\P(\C)$, the partition of $\c$ generated from $\C$, is defined as
	$$\P(\C):= \varphi_\C^{-1}(Im (\varphi_\C)).$$
	We  introduce the maximal (resp. minimal) redundancy of $\C$ as 
	$$M(\C):= \max_{\x\in\c}\#\varphi_\C(\x)\,,\qquad m(\C):= \min_{\x\in\c}\#\varphi_\C(\x).$$
	Moreover, the cells of $\P(\C)$ which are included in an element of $\C'\subset \C$ are gathered in $\P_\C(\C')$ : 
$$\P_\C(\C') := \{A\in\P(\C): \exists\r\in\C',\, A\subseteq\r\}.$$
\end{definition}
We provide a more explicit characterization of $\P(\C)$ in Proposition \ref{lemma:PC}. 
	\begin{proposition}\label{lemma:PC}
		Let $\C$ be a finite collection of sets of $\Rset^d$. Then 
		\begin{equation*}
			\P(\C) = \Bigl\{ \bigcap_{\r \in \tilde\C} \r \setminus \bigcup_{\r \in \C\setminus\tilde\C} \r : \tilde\C \subseteq \C \Bigr\}\setminus\{\emptyset\}.
		\end{equation*}
	\end{proposition}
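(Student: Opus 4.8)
The plan is to read $\P(\C)=\varphi_\C^{-1}(Im(\varphi_\C))$ as the collection of (non-empty) level sets of the activation function $\varphi_\C$, and then to identify each level set with one of the sets $A_{\tilde\C}:=\bigcap_{\r\in\tilde\C}\r\setminus\bigcup_{\r\in\C\setminus\tilde\C}\r$ appearing on the right-hand side.

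First I would make the reading of the notation explicit. Since $\varphi_\C$ has domain $\Rset^d$ and its image lies in the finite set $2^\C$, the object $\varphi_\C^{-1}(Im(\varphi_\C))$ is meant as the partition of $\Rset^d$ into the fibers $\varphi_\C^{-1}(\{S\})$, $S\in Im(\varphi_\C)$. A value $S\subseteq\C$ belongs to $Im(\varphi_\C)$ exactly when $\varphi_\C^{-1}(\{S\})\neq\emptyset$, so this family of non-empty fibers can equivalently be written as $\{\varphi_\C^{-1}(\{S\}):S\subseteq\C\}\setminus\{\emptyset\}$, where now $S$ ranges over all subsets of $\C$. This already has the shape of the claimed formula, so it only remains to identify each fiber explicitly.

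The key step is the pointwise identity $\varphi_\C^{-1}(\{\tilde\C\})=A_{\tilde\C}$ for every $\tilde\C\subseteq\C$, which I would establish by a direct double inclusion. For $\x\in\Rset^d$, the equality $\varphi_\C(\x)=\tilde\C$ means $\{\r\in\C:\x\in\r\}=\tilde\C$, i.e. $\x\in\r$ for every $\r\in\tilde\C$ and $\x\notin\r$ for every $\r\in\C\setminus\tilde\C$; the first condition is $\x\in\bigcap_{\r\in\tilde\C}\r$ and the second is $\x\notin\bigcup_{\r\in\C\setminus\tilde\C}\r$, which together are precisely $\x\in A_{\tilde\C}$. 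Substituting this identity into the description from the previous paragraph gives the stated formula. Finiteness of $\C$ is used only to ensure that $2^\C$, and hence $\P(\C)$, is a finite collection.

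I do not expect a genuine obstacle, since the statement is essentially a reformulation of the definition; the care needed is purely in the bookkeeping. One must fix the conventions $\bigcap_{\r\in\emptyset}\r=\Rset^d$ and $\bigcup_{\r\in\emptyset}\r=\emptyset$ so that the extreme indices behave correctly, namely $\tilde\C=\emptyset$ produces the cell $\Rset^d\setminus\c$ and $\tilde\C=\C$ produces $\bigcap_{\r\in\C}\r$. One should also check that deleting $\{\emptyset\}$ on the right-hand side matches exactly the removal of the values $S\notin Im(\varphi_\C)$, whose fibers are empty, and note that distinct $\tilde\C$ give disjoint $A_{\tilde\C}$ as befits fibers of a function, so the resulting collection is indeed a partition.
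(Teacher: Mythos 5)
Your proposal is correct and follows essentially the same route as the paper: both unpack $\P(\C)=\varphi_\C^{-1}(Im(\varphi_\C))$ as the family of non-empty fibers of $\varphi_\C$ and identify each fiber with $\bigcap_{\r\in\tilde\C}\r\setminus\bigcup_{\r\in\C\setminus\tilde\C}\r$, the paper parametrizing by points $\x$ (so non-emptiness is automatic) where you parametrize by subsets $\tilde\C$ (so non-emptiness corresponds to membership in the image). Your additional remarks on the empty-index conventions and disjointness of distinct fibers are harmless bookkeeping the paper leaves implicit.
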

The proof of Proposition \ref{lemma:PC} is deferred to the appendix. Figures \ref{fig:covering} and \ref{fig:partition} illustrate the fact that $\P(\C)$ can contain (much) more sets than $\C$ itself. 
	
\begin{figure}[ht]
	\centering
	\begin{minipage}{.48\textwidth}
		\centering
		\includegraphics[width=1\linewidth, height=0.25\textheight]{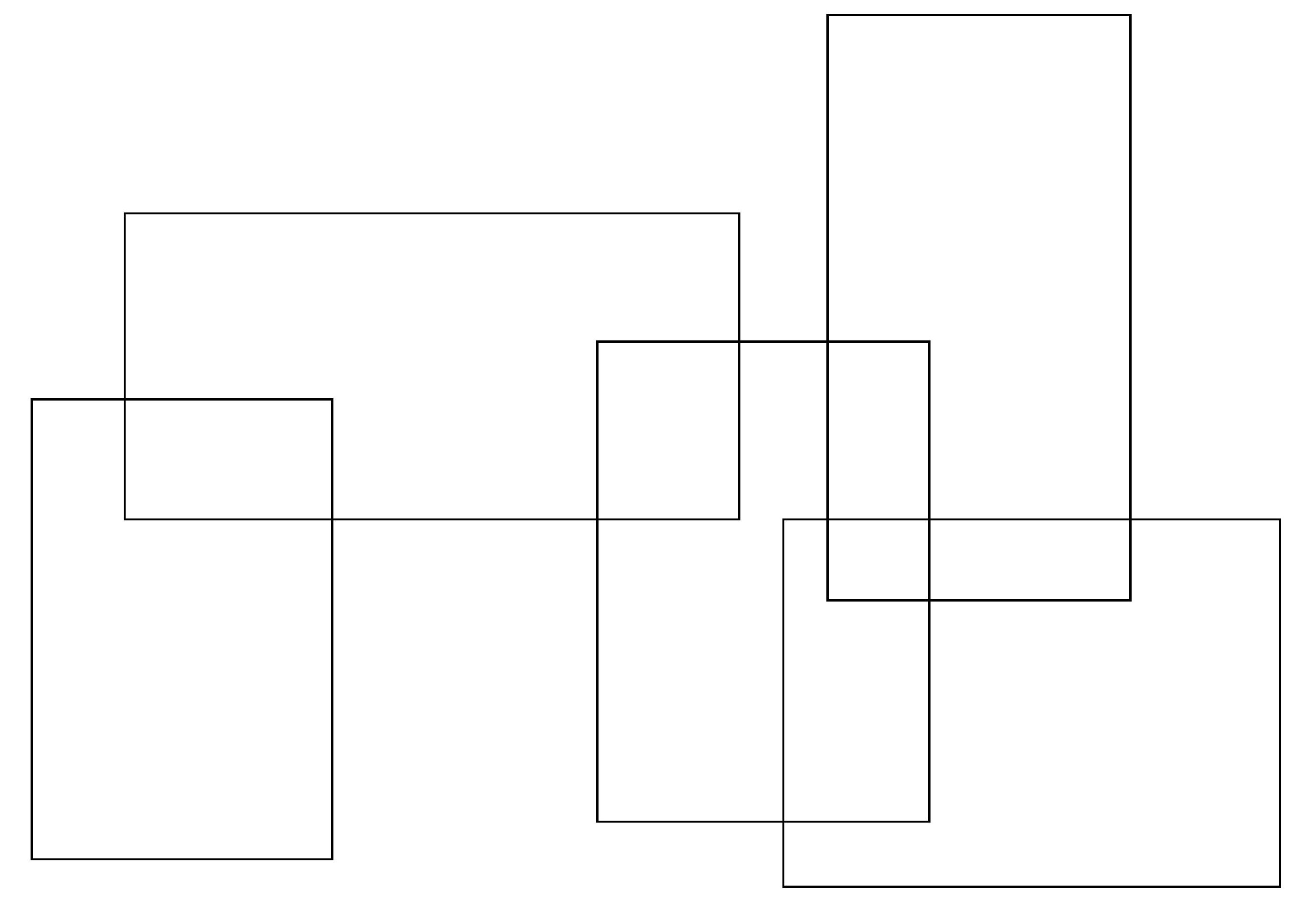}
		\caption{\label{fig:covering}The 5 overlapping elements of $\C$.}
		
	\end{minipage}%
	\begin{minipage}{0.48\textwidth}
		\centering
		\includegraphics[width=1\linewidth, height=0.25\textheight]{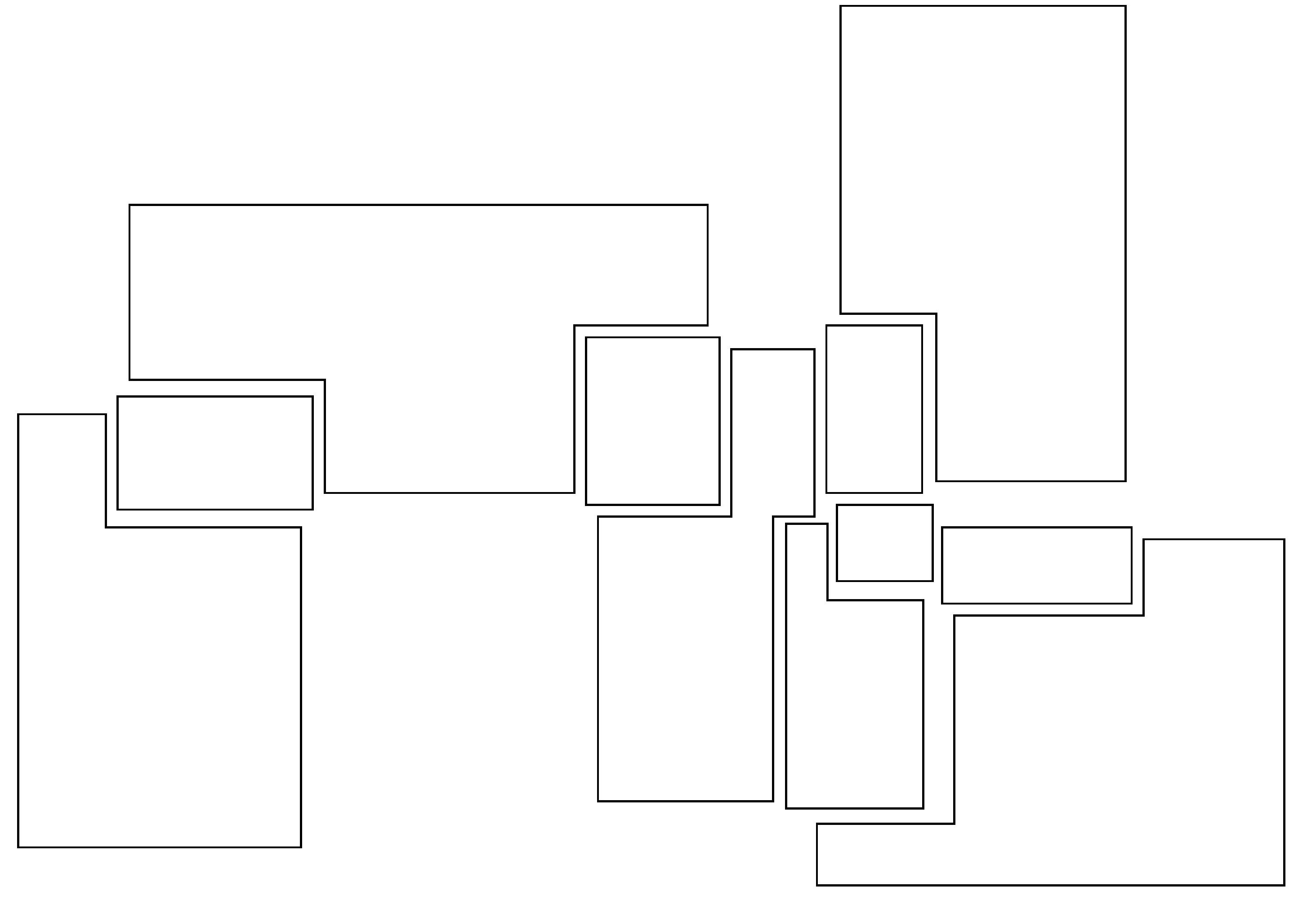}
		\caption{\label{fig:partition}The 11 cells of the partition $\P(\C)$.}
	\end{minipage}
\end{figure}
		
	As the construction of $\P(\C)$ can be time consuming, it is noteworthy that there is no need to fully describe $\P(\C)$ to compute the value of $g_n(\x)$ for some $\x\in\Rset^d$. The trick is to identify the unique cell of $\P(\C)$ which contains $\x$. By creating binary vectors of size $\#\C$, whose value is $1$ if $\x$ fulfills the rule's condition and $0$ otherwise, this cell identification is a simple sequence of vectorial operations. Figure~\ref{fig:partition_trick} is an illustration of this process (we refer to \cite{Margot18} for more details).

	As compared to ours, the predictors designed by Random Forests (RF) and other tree ensembles algorithms do not satisfy the ERM principle \eqref{argmin}: they are based on the averaging of the predictions of the activated rules at the point $\x$. This averaging is justified empirically and theoretically as it lowers the variance of the prediction thanks to the independence of the random rules as explained in \cite{Breiman01}. The averaging does not decrease the approximation error of the prediction and there are few results about their consistency, we may cite \cite{Denil13,Scornet15} for RF only. On the opposite, our estimator defined by \eqref{estimator_partition} is the empirical conditional expectation on the cell of $\P(\C_n)$. Under some conditions, the approximation error of the prediction is low thanks to the ERM principle. The variance will be controlled through the significance condition (see Definition~\ref{def:suitable_covering} below) as in the RIPE algorithm of \cite{Margot18}. 


\begin{figure}[ht]
	\centering
	\makebox{\includegraphics[scale=0.35]{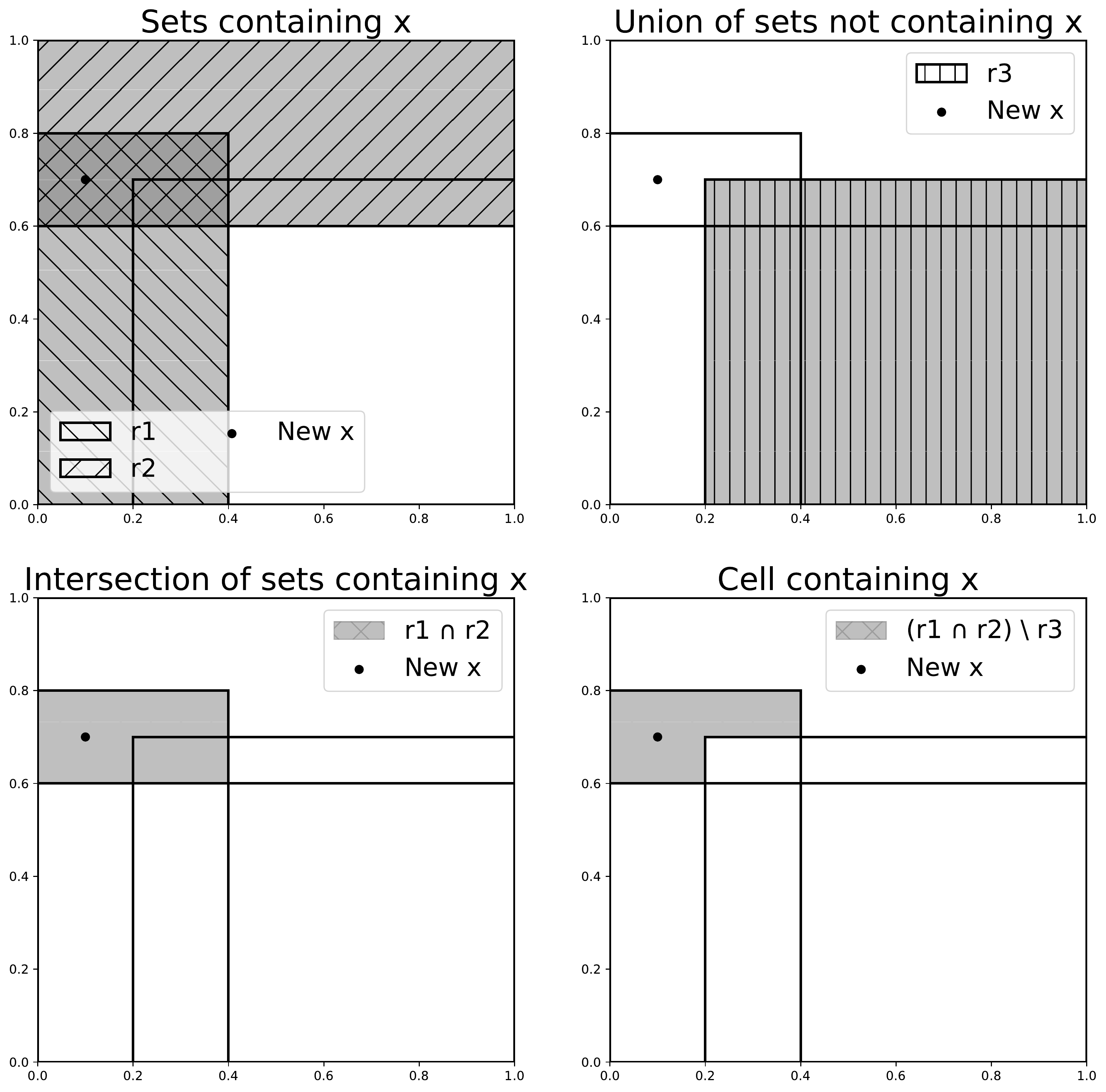}}
	\caption{\label{fig:partition_trick}Evaluation steps of the cell containing $\x = (0.1, 0.7)$ of the partition generated from the covering of $ [0,1]^2$, $\C =\{ \r_1, \r_2, \r_3\}$. Using partition from a covering allows to generate complex cells with a simple interpretation $(\r_1 \ And \ \r_2)$, where a classical partitioning algorithm cannot. Note that the condition $\x$ satisfies $(\r_1 \ And \ \r_2)$ implicitly implies that $\x$ does not satisfy $\r_3$.}
\end{figure}

\subsection{Interpretability}\label{sec:interpretability}

In many sensitive areas, such as health care, justice, defense or asset management the importance of interpretability in the decision-making process has been underlined. As explained in \cite{Lipton18}, there are different meanings of \emph{interpretability} depending on the desiderata of the user and the expected properties of the algorithms. In this paper, we embrace the intuitive definition of model interpretability of \cite{Biran17}: \textbf{Interpretability is the degree to which an observer can understand the cause of a decision.}  \emph{Interpretable} models should provide a parsimonious characterization of an estimator of $g^*$. Nowadays, the most popular and accurate algorithms for regression, such as Support Vector Machines, Neural networks, Random Forests,\ldots are no interpretable. This lack of interpretability comes from the complexity of the models they generate. We call them black box models. We state that the novel family of (quasi-)\emph{covering algorithms} described in Section \ref{sec:Application} can achieve different interpretability-accuracy trade-off by reducing the complexity of the generated models while being accurate enough, maintaining weak consistency guarantees. 

\begin{figure}[ht]
	\centering
	\includegraphics[scale=0.9]{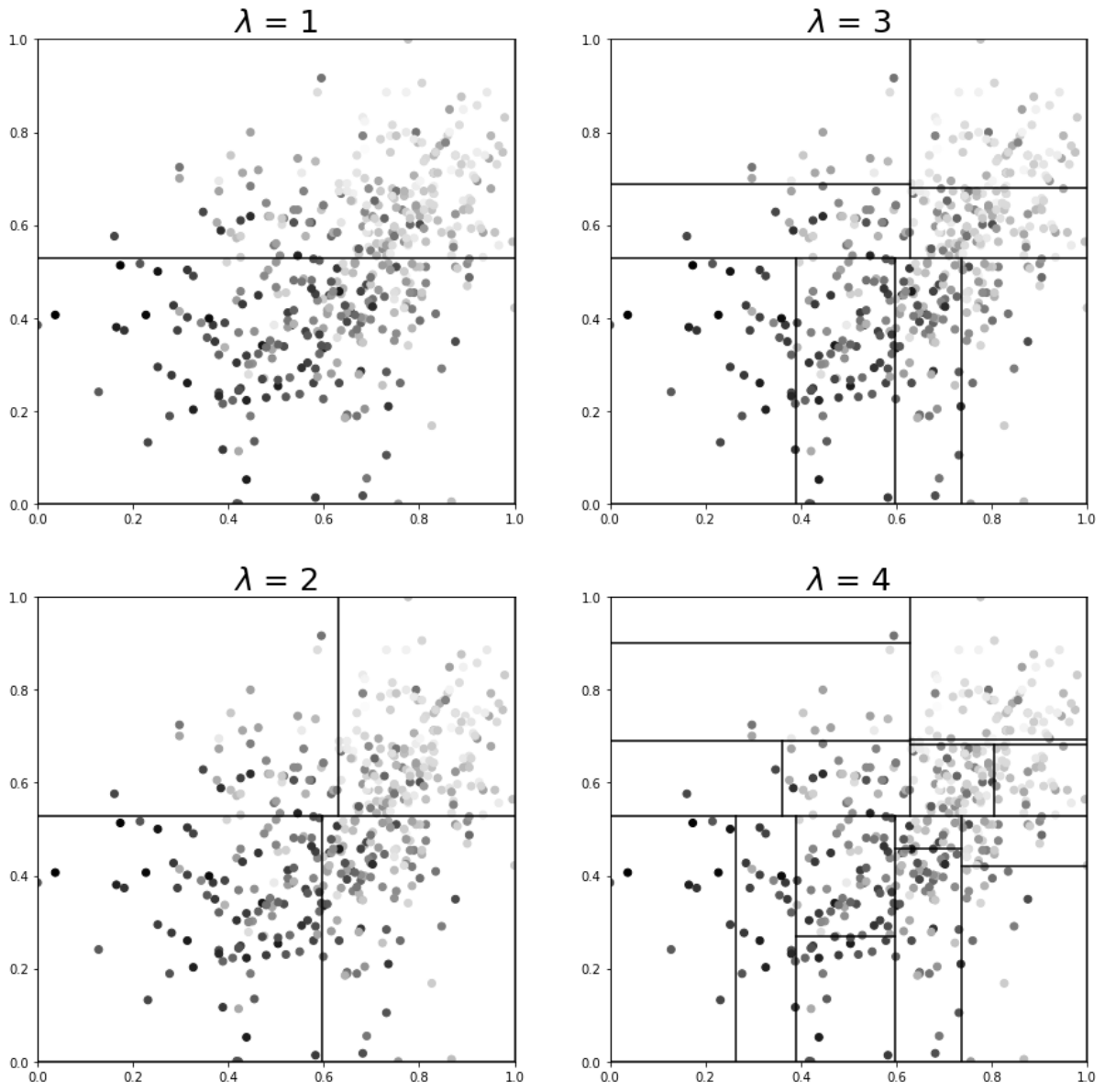}
	\caption{Partitions generated by fully deployed decision tree algorithm  without pruning for a maximal depth $\lambda \in \{1,2,3,4\}$. Here the number of rules satisfies $2^\lambda$,  $\lambda \in \{1,2,3,4\}$.}
	\label{fig:partition_tree}
\end{figure}

We distinguish two main approaches to generate interpretable prediction models. The first approach is to create black-box models and then to summarize them in a so-called \emph{post-hoc} interpretable model. Recent researches propose to use explanation models such as \emph{LIME} \citep{Ribeiro16}, \emph{DeepLIFT} \citep{Shrikumar19} or \emph{SHAP} \citep{Lundberg17} to interpret black-box models. These explanation models  measure the importance of a feature on the prediction process (see \cite{Guidotti18} for a survey of existing methods). The other possibility is to use \emph{intrinsic} interpretable algorithms  that directly generate interpretable models.
 
The interpretability of a rule-based estimator is achieved when the length $k$ of each rule and the number of rules are small, i.e. when the corresponding collection of rules is simple. Considering that a collection $\C$  with one rule of length $k$ is just as interpretable as a collection with  $k$ rules of length $1$ since they consists in the same number of tests, interpretability gets a certain additivity property. With this in mind, we are able to quantify the interpretability index of a collection $\C$.
\begin{definition}
The \emph{interpretability index} of a collection of rules $\C$ is defined by
\begin{equation}\label{eq:inter}
	Int(\C) := \sum_{\r \in \C} \text{length}(\r),
\end{equation}
\end{definition}
By convention, we define the interpretability index of the estimator $g_n$ generated by a set of rules $\C_n$ as $Int(g_n)=Int(\C_n)$. \begin{remark}
The interpretability index $Int(g_n)$  does not take into account the notion of interaction described in \cite{Friedman08}. Indeed, a low interpretability index does not inform whether $g_n$ exhibits interactions between variables or not. Many interactions may exhibit from a set of $k$ rules of length $1$ whereas it is less the case for a unique rule of length $k$. Due to the additivity property, both cases have the same interpretability index.
\end{remark}

We aim at designing estimators which combine consistency guarantees keeping an interpretability index as small as possible.
To demonstrate the consistency of a partition-based estimator $g_n$, results such as Theorem 13.1 in \cite{DistributionFree}, which is a generalization of Stone's Theorem to data-dependent partitioning estimates, are usually applied. Then, ``it is required that the diameter of the cells of the data-dependent partition [...] converge in some sense to zero" (Condition~(13.10) in \citealp{DistributionFree}). However, as illustrated in Figure~\ref{fig:partition_tree}, such approaches based on fine partitions have a high interpretability index. A cell shrinkage condition such as Condition~(13.10) in \cite{DistributionFree} necessarily implies  that the number of elements in the partition $\P_n$ tends to infinity and that the interpretability index also tends to infinity.

For an estimator defined on a data-dependent quasi-covering, we can get rid of this condition and avoid the condition of shrinkage of the sets in the collection $\C_n$. Any prediction can potentially be explained by a small set of  rules that are interpretable: See Table~\ref{tab:rules_ozone}  in Section \ref{sec:Application} for an example. The generated partition may be finer and more complex than a classical data-dependent partition, while the explanation of the quasi-covering is easily understood by humans.  

Despite the fact that we have not yet proven the parsimony of the selected set of rules in general, the use of quasi-covering instead of partition and conditions other than the usual ones drastically reduce the number of rules in simple cases. This is discussed in detail in Section~\ref{sec:discussionInterpretability}.


We prove the consistency of a quasi-covering based estimator $g_n$ by carefully designing the covering elements. The key concept is that of suitable data-dependent quasi-covering introduced in Section~\ref{sec:mainresult}. Proposition~\ref{prop:emp} provides rates of convergence of empirical conditional expectations. This preliminary result of independent interest is crucial  to prove the weak consistency of suitable data-dependent covering estimators stated in Theorem~\ref{th:consistance}.  Our main result Theorem~\ref{th:consistance} is proven in Section \ref{sec:proof} while the proofs of intermediate results are postponed to the appendix.
We apply our quasi-covering approach on artificial data and real data using different rule generators, and we compare it to usual existing  algorithms in Section~\ref{sec:Application}.

\section{Consistent prediction based on suitable quasi-coverings}\label{sec:mainresult}

	In order to ensure the consistency of quasi-covering based algorithms we have to introduce the notion of \emph{suitable} sequence of data-dependent quasi-coverings. 
	
	\subsection{Suitable data-dependent quasi-coverings}
	
	\subsubsection{Definition}
	We use the classical notation $x_+=\max\{x,0\}$ for any $x\in\Rset$.
	\begin{definition}\label{def:suitable_covering}
		Let $\alpha\in [0, 1/2)$. We call a sequence $(\C_n)_{n \ge 1}$ of data-dependent finite collections of sets $\r$ of $\Rset^d$ \emph{$\alpha$-suitable} if it satisfies:
		\begin{enumerate}
			\item 
			\begin{hypo}\label{H:subsetcoverage}
				The set coverage condition
			\end{hypo}
			\begin{equation*}
				\Q_n(\r) > \,n^{-\alpha}, \qquad \r\in\C_n, n \geq 1.
			\end{equation*}
			\item
			\begin{hypo}\label{H:collectempcoverage}
				The collection coverage condition
			\end{hypo}
			\begin{equation*}
				\Q_n(\c_n^c) = o_\Pr(1)
			\end{equation*}
			where $\c_n = \cup_{\r \in \C_n} \r$.
			\item
			\begin{hypo}\label{H:significance}
				The significance condition
			\end{hypo}
			
			For some consistent estimator $(\sigma_n^2)$ of $\sigma^2$ and sequences $\beta_n = o_\Pr(1)$ and $\varepsilon_n = o_\Pr(1)$, it holds for all $n\geq 1$
			\begin{equation*}
				\C_n=\C_n^s\cup\C_n^i
			\end{equation*}
			where the significant sets $\C_n^s$ are defined by
			\begin{multline}\label{eq:Cns_emp}
				\C_n^s := \Big\{\r\in\C_n : \beta_n\big| \E_n[Y | \X \in \r] - \E_n[Y]\big| \geq \sqrt{(\V_n(Y | \X \in \r ) - \sigma_n^2)_+}\Big\}
			\end{multline}
			and the insignificant sets $\C_n^i$ are defined by
			\begin{equation}\label{eq:Cni_emp}
				\C_n^i := \left\{\r\in\C_n\setminus \C_n^s : \varepsilon_n \geq \sqrt{ ( \V_n(Y | \X \in \r ) - \sigma_n^2)_+}\right\}.
			\end{equation}
			
			\item
			\begin{hypo}\label{H:redundancy}
				The redundancy condition
			\end{hypo}
			
			The redundancies of $\C_n^s$ and $\C_n^i$ satisfy
			\begin{equation}\label{eq:redundancy_significant}
				\frac{M(\C_n^s)}{m(\C_n^s)}= o_\Pr(\beta_n^{-2}\wedge n^{1/2 - \alpha})
			\end{equation}
			and
			\begin{equation}\label{eq:redundancy_insignificant} 
				\frac{M(\C_n^i)}{m(\C_n^i)} = o_\Pr(\varepsilon_n^{-2}\wedge n^{1/2 - \alpha}).
			\end{equation}
		\end{enumerate}
	\end{definition}


	 	A few remarks on the conditions of Definition~\ref{def:suitable_covering} are in order. The set coverage condition \ref{H:subsetcoverage} controls the minimal number of observations in a set of the collection $\C_n$, ensuring that the empirical conditional expectation is close to the conditional expectation given any of the sets in $\C_n$. The collection coverage condition \ref{H:collectempcoverage} guarantees that $\C_n$ covers a sufficient part of $\Rset^d$. The significance conditions \ref{H:significance} control the variance of the rules in two different ways; The significant rules are predicting significantly differently than the average and may have a large variance, the insignificant rules are not significant and have a small variance. The redundancy condition \ref{H:redundancy} controls how far from a partition the collections $\C_n^s$ and $\C_n^i$ can be. This condition is fulfilled if the sets in these collections are mutually disjoint.
		
\begin{remark}
	The significant condition \eqref{eq:Cns_emp} is not related to a condition on the diameter of the set $\r$. It is intended to detect areas of the feature space where the conditional expectation of $Y$ is noticeably different from its unconditional expectation over the whole feature space. See Example~\ref{exampleSignifLargeDiameter}. The insignificant condition \eqref{eq:Cni_emp} can be proved to hold under a shrinkage condition on the  diameter of the set, see Proposition \ref{prop:diam}. This is not a necessary condition. 
\end{remark}

\begin{remark}\label{rem:sigmaknown}
	We may assume in the following that the noise variance $\sigma^2$ is known or that a good enough estimator $\sigma_n^2$ exists. Theorem~\ref{th:consistance} proves the consistency of the procedure under the assumption that $|\sigma^2_n-\sigma^2|=O_\Pr(n^{\alpha-1/2})$. One refers to \cite{Liitiainen09} as a reference on the difficult question of the bias-variance trade-off of some residual variance estimators. Most of the existing estimators are based on Nearest-Neighbor methods. Random forests were not used for residual variance estimation until recently, see \cite{ramosaj19}.  None of these results is satisfactory in our setting of rule-based procedure. We let for a future work the study of a good enough estimator $\sigma_n^2$ based on a covering algorithm. We will use on real data in Section \ref{sec:Application} the smallest of the empirical variances of the rules. 
\end{remark}

	\begin{remark}\label{rem:redundancy}
	An easy way to ensure \ref{H:redundancy} is to constrain the proportion of any element of $\C_n$ which can be included in the union of the others to be small. Let $(\C_n)$ be a sequence of collections of sets $\r$ of $\Rset^d$ that fulfills~\ref{H:subsetcoverage} and $\gamma \in (0,1)$. We consider $(\r_i)_{1\le i\le \# \C_n}$ any ordering of $\C_n$. If 
	$$\Q_n\Big(\r_{i+1} \bigcap\Big\{ \bigcup\limits_{1\le j\le i} \r_j\Big\}\Big) \leq \gamma\, \Q_n(\r_{i+1})\,,\qquad 1\le i\le \# \C_n-1\,,$$
	then the cardinality of $\C_n$ is upper bounded by $\frac{n^\alpha }{1 - \gamma}$ for $n$ sufficiently large. Indeed, by the inclusion-exclusion principle we get
	$$ 1 \geq \Q_n(\cup_{1\leq i\leq \#\C_n} \r_i)=\sum_{i=1}^{ \# \C_n}\Q_n(\r_i \setminus \cup_{1\le j\le i-1} \r_j)\ge \# \C_n\, (1-\gamma)\, n^{-\alpha}.$$
	Thus \ref{H:redundancy} can be checked for any $\alpha \in [0, 1/4)$, using the fact that $M(\C_n^s)$ and $M(\C_n^i)$ are smaller than $\frac{n^\alpha }{1 - \gamma}$ and setting $\beta_n = o_\Pr(n^{  - \alpha/2})$ and $\varepsilon_n = o_\Pr(n^{ - \alpha/2})$.
\end{remark}

\subsubsection{Advantages of suitable quasi-coverings}\label{sec:discussionInterpretability}

We discuss how our approach can provide estimators with low interpretability indices by considering quasi-coverings instead of partitions on the one hand and by introducing  conditions on the elements of the quasi-coverings which are completely different from Condition~(13.10) in \citet{DistributionFree} on the partition cells on the other hand.

We already discussed and illustrated in Figures~\ref{fig:covering} and \ref{fig:partition} that a quasi-covering $\C_n$ contains less elements than the quasi-partition $\P(\C_n)$. Then its interpretability index is lower. 

Moreover, $\P(\C)$ contain more complex elements than those of a partition-based algorithm: the cells in $\P(\C)$ in Figure~\ref{fig:partition} are not necessarily conjunctions of simple tests on coordinates as the sets in Figure~\ref{fig:covering}. Our covering-based approach generates complex cells without sacrificing interpretability since the algorithm keeps small the number of elements of $\C$ which have to be designed. 

Let us illustrate this point in the following example considering rules designed as hyperrectangles which is usual, see Remark \ref{rem:hyperrect}.
\begin{proposition}\label{prop:compPartitionCovering}
	Let $C = [0,1]^d \subset \Rset^d$. 
	\begin{enumerate}
		\item The minimal cardinality of a partition $\P$ of $\Rset^d$ by hyperrectangles such that $C\in\P$ is $2d+1$.
		\item The minimal cardinality of a covering $\C$ of $\Rset^d$ by hyperrectangles such that $C\in\P(\C)$ is $2$.
	\end{enumerate}
\end{proposition}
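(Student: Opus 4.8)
The plan is to treat the two items separately, establishing for each a matching upper bound (by exhibiting an explicit family of hyperrectangles) and a lower bound (by showing no smaller family can work).

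For item~1 the upper bound comes from an explicit partition obtained by \emph{peeling off slabs} coordinate by coordinate: keep $C=[0,1]^d$ and, for each $i\in\{1,\dots,d\}$, adjoin the two hyperrectangles $[0,1]^{i-1}\times(-\infty,0)\times\Rset^{d-i}$ and $[0,1]^{i-1}\times(1,\infty)\times\Rset^{d-i}$. One checks this is a genuine partition of $\Rset^d$ by sorting any point according to the first coordinate in which it leaves $[0,1]$; it has $2d+1$ cells. The lower bound is the crux of the whole proposition. Fix any finite partition $\P$ of $\Rset^d$ into hyperrectangles with $C\in\P$; the aim is to attach to each of the $2d$ facets of $C$ a cell of $\P$ that is distinct from $C$ and from the cells attached to the other facets. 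Writing $z=(1/2,\dots,1/2)$ for the centre of $C$ and $e_i$ for the $i$-th basis vector, consider the facet centres $p_i^{\pm}=z\pm\tfrac12 e_i$, which lie in $C$, together with the outward sequences $p_i^{+}+\tfrac1k e_i$ and $p_i^{-}-\tfrac1k e_i$ for $k\ge 1$, whose terms lie outside $C$ and are therefore covered by cells other than $C$. Since $\P$ is finite, some single cell $R_i^{\pm}$ covers infinitely many terms of the corresponding sequence.

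The key step is then a short case analysis on the coordinate intervals of these $2d$ cells. Recording that the $i$-th interval of $R_i^{\pm}$ must reach a value on the far side of the facet while the remaining intervals must contain $1/2$, one shows that any coincidence $R_i^{\pm}=R_j^{\pm}$ between two of them would force \emph{every} coordinate interval of the common box to contain $1/2$ (by convexity of intervals), that is, force $z\in R_i^{\pm}\cap C$ with $R_i^{\pm}\neq C$, contradicting disjointness of the cells. Hence the $2d$ cells are pairwise distinct and distinct from $C$, giving $\#\P\ge 2d+1$. This is the step I expect to be the main obstacle, and the only point requiring real care is to keep the argument robust to whether the boxes are open, closed, or half-open; this is precisely why I work with the explicit points $p_i^{\pm}\pm\tfrac1k e_i$ and the centre $z$ rather than with boundaries.

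Item~2 is comparatively immediate. No single hyperrectangle can work: a lone box covering $\Rset^d$ must equal $\Rset^d$, whence $\P(\C)=\{\Rset^d\}\not\ni C$, so at least two sets are needed. For the upper bound I would simply take $\C=\{C,\Rset^d\}$, which indeed covers $\Rset^d$; applying the description of $\P(\C)$ from Proposition~\ref{lemma:PC} to the four subsets $\tilde\C\subseteq\C$ leaves exactly the two nonempty cells $C$ and $\Rset^d\setminus C$, so $C\in\P(\C)$ with $\#\C=2$. Thus the constructions for both items and the whole of item~2 are routine once Proposition~\ref{lemma:PC} is invoked, and essentially all the work lies in the facet-counting lower bound of item~1.
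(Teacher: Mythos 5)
Your proof is correct and follows essentially the same route as the paper's: the identical slab-peeling partition $[0,1]^{i-1}\times(-\infty,0)\times\Rset^{d-i}$, $[0,1]^{i-1}\times(1,+\infty)\times\Rset^{d-i}$ for the upper bound, a lower bound obtained by exhibiting $2d$ witness points outside $C$ (one per facet direction) whose covering cells must be pairwise distinct and distinct from $C$ because interval convexity would otherwise force the shared hyperrectangle to contain a point of $C$, and the two-element covering $\{C,\Rset^d\}$ for item~2. The differences are cosmetic: the paper uses the witness points $2e_i$ and $e_i-e_{i+1}$ where you use facet-center rays with remaining coordinates $1/2$ (your symmetric choice arguably streamlines the case analysis), and your pigeonhole extraction of a cell containing infinitely many terms of each outward sequence is superfluous, since a single exterior point per facet already suffices.
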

The proof of Proposition \ref{prop:compPartitionCovering} is deferred to the appendix and is a consequence of the fact that $\Rset^d\setminus [0,1]^d$ is a more complex set than an hyperrectangle.

On the other hand, our Condition~\ref{H:significance} suits better the requirement of interpretability than Condition~(13.10) of \cite{DistributionFree} as they do not imply any shrinkage of the sets in the collection $\C_n$ and the number of sets in $\C_n$ does not necessarily have to grow with $n$. We illustrate in the following example this point when $g^*$ itself is rule-based with a low interpretability index: in this important situation, an estimator with  low interpretability index is expected.

\begin{example}\label{exampleSignifLargeDiameter}
	The condition involved in \eqref{eq:Cns_emp} can hold for a set $\r$ with arbitrary diameter that does not satisfy Condition~(13.10) of \cite{DistributionFree}. Consider the case $g^*=\ind{A}$ for some Borel set $A$ such that $0<\Pr(\X\in A)<1$ and assume that $\sigma^2$ is known.  Then $\r=A$ is a significant set as it satisfies the condition involved in \eqref{eq:Cns_emp} for some well-chosen $\beta_n = o_\Pr(1)$. Indeed, from the Strong Law of Large Numbers $k_n:=\#\{i:\X_i\in A\}\sim n\Pr(\X\in A)$ a.s. as $n\to \infty$. Then repeated application of the Central Limit Theorem yields
	\begin{align*}
		\big| \E_n[Y | \X \in A] - \E_n[Y] \big| &\geq \E_n[Y | \X \in A] - \E_n[Y] \\
		&= 1 - \dfrac{k_n}n + \dfrac1{k_n}\sum_{i=1}^{ n}Z_i\ind{\X_i\in A} - \dfrac1{n}\sum_{i=1}^{ n}Z_i \\
		&= 1 - \Pr(\X\in A) + O_\Pr(n^{-1/2})
	\end{align*}
	and
	\begin{align*}
		(\V_n(Y | \X \in A) - \sigma^2)_+ 	&\leq | \V_n(Y | \X \in A) - \sigma^2| \\	
		&= \bigg|\dfrac1{k_n}\sum_{i=1}^{ n}Z_i^2\ind{\X_i\in A}-\Big(\dfrac1{k_n}\sum_{i=1}^{ n}Z_i \ind{\X_i\in A}\Big)^2 - \sigma^2\bigg|\\
		&= O_\Pr(n^{-1/2}).
	\end{align*}
	It follows that \eqref{eq:Cns_emp} holds for $\r = A$ with 
	\begin{equation*}
		\beta_n = \frac{\sqrt{(\V_n(Y | \X \in A) - \sigma^2)_+}}{\big| \E_n[Y | \X \in A] - \E_n[Y] \big| } = O_\Pr(n^{-\frac 14}).
	\end{equation*}
	For similar reasons \eqref{eq:Cns_emp} also holds for $\r = A^c$ with 
	\begin{equation*}
		\beta_n' = \beta_n \vee \frac{\sqrt{(\V_n(Y | \X \in A^c) - \sigma^2)_+}}{\big| \E_n[Y | \X \in A^c] - \E_n[Y] \big| } = O_\Pr(n^{-\frac 14}).
	\end{equation*}
	Finally it can easily be checked that $\C_n = \{\r\in\{A,A^c\} : \Q_n(\r) > n^{-\alpha}\}$ defines an $\alpha$-suitable sequence of quasi-partitions for any $\alpha\in(0,\frac 12)$. The number of rules in $\C_n$ tends to $2$.
\end{example}
	 	 
	 \begin{remark}\label{rem:abc} In Section~\ref{sec:Application} and for many usual procedures, only hyperrectangles are considered as rules. In Example~\ref{exampleSignifLargeDiameter}, if $A$ is assumed to be an hyperrectangle itself, $A^c$ can be written as the union of a set of hyperrectangles as in the proof of Proposition~\ref{prop:compPartitionCovering} and similar arguments as above apply to prove that an $\alpha$-suitable sequence of quasi-coverings which are all subsets of a small set of hyperrectangles can thus be defined accordingly. 
	 \end{remark}
	 
	Note that in Proposition~\ref{prop:compPartitionCovering}, Example~\ref{exampleSignifLargeDiameter}, and Remark~\ref{rem:abc}, the number of elements of the quasi-coverings and the interpretability index are bounded as $d$ or $n$ grows whereas they would need to tend to infinity for any method partition-based or relying on Condition~(13.10) of \cite{DistributionFree}.

Besides, another advantage is that the check of the conditions involved in \eqref{eq:Cns_emp} and \eqref{eq:Cni_emp} for elements of a quasi-covering can be done simultaneously with parallel computing on the contrary to any condition on partition cells that depend on each other.

\subsection{Consistency of data-dependent quasi-covering algorithms}

\subsubsection{Generalization of the partitioning number}\label{sec:Partitioning}	

Conditions are required to control the complexity of the family of partitions $\P(\C_n)$ that the algorithm can generate. These conditions use some concepts introduced in \cite[Sec. 1.2]{Nobel96} (see also \cite[Def 13.1]{DistributionFree}). The standard definitions have to be adapted since we consider collections which do not necessarily cover $\Rset^d$. To discriminate with the standard definitions for partitions we denote these quantities with tildes.
	\begin{definition}\label{def:nbr_partition} Let $\Pi$ be a family of finite collections of disjoint sets of $\Rset^d$. 
		\begin{enumerate}
			\item The maximal number of sets in an element of $\Pi$ is denoted by
			\begin{equation*}
				\widetilde\M(\Pi):= \sup \left\{ \#\P: \P \in \Pi \right\}.
			\end{equation*}
			\item For a set $\x_1^n = \{\x_1, \dots, \x_n\} \in (\Rset^d)^n$, let 
			\begin{equation*}
				\widetilde\Delta(\x_1^n, \Pi):= \#\big\{ \{\x_1^n \cap A: A \in \P \}\setminus\{\emptyset\} : \P \in \Pi \big\}.
			\end{equation*} 
			\item The partitioning number $\widetilde\Delta_n(\Pi)$ of $\Pi$ is defined by			
			\begin{equation*}
				\widetilde\Delta_n(\Pi):= \max \limits_{\x_1^n \in (\Rset^d)^n} \widetilde\Delta(\x_1^n, \Pi).
			\end{equation*}
		\end{enumerate}		
	\end{definition}
	The partitioning number is the maximal number of different collections of disjoint non-empty subsets of any $n$-point set that can be induced by elements of $\Pi$ and is smaller than or equal to $(n+1)^n$, which is an upper bound of the number of collections of disjoint subsets of $\x_1^n$ as the number of maps from $\{1,\dots,n\}$ to $\{0,\dots,n\}$.

	Classical consistency theorems require to control the partitioning number of the family of partitions $\Pi_n$ over which the estimators are defined (see (13.7) and (13.8) in \citealp[Theorem 13.1]{DistributionFree}). As illustrated in Figure~\ref{fig:partition}, the complexity of the sets in $\P(\C_n)$ may be high even if the complexity of the sets in $\C_n$ is low. This fact makes the partitioning number of the family of partitions tough, if not impossible, to bound because no control on the shape of the cells is available. 
	
	Usually $\Pi_n$ is chosen as $\{\P(\C_n(\d_n)) : \d_n\in\S^n\}$. It can be useful to take into account the building process of the family of sets $\C_n(\d_n)$ in the evaluation of the partitioning number when the partitioning number of $\{\P(\C_n(\d_n)) : \d_n\in\S^n\}$ cannot be controlled easily. This is what the definition below enables by considering $\Pi_n$ as a refinement of $\{\P(\C_n(\d_n)) : \d_n\in\S^n\}$.
	
	\begin{definition}\label{def:refiement}
		Let $\mathcal F$ and $\mathcal F'$ be two families of finite collections of disjoint sets. $\mathcal F'$ is called a refinement of $\mathcal F$ if 
	\begin{equation*}
		\forall F\in\mathcal F, \quad\exists F'\in\mathcal F', \quad\forall A\in F,\quad A = \bigcup_{A'\in F' : A'\subseteq A}  A'.
	\end{equation*}
	\end{definition}

\subsubsection{Main result}

Our main result provides the consistency of estimators based on suitable quasi-covering sequences. 

	\begin{theorem}\label{th:consistance}
		Assume that $g^*$ and $\Q$ satisfy \ref{H:indep}, \ref{H:bounded}. Let $\alpha\in[0,\frac 12)$ and $(\C_n)$ be an $\alpha$-suitable sequence of finite collections of sets of $\Rset^d$ such that:
				
		\begin{hypo}\label{H:collectthcoverage}
			\begin{equation*}
				\Q(\c_n^c) = O_\Pr(n^{\alpha-1/2})
			\end{equation*}
			where $\c_n = \cup_{\r\in\C_n} \r$;
		\end{hypo}
		
		\begin{hypo}\label{H:partitions}
			\begin{equation*}
				\widetilde\M(\Pi_n) = o(n) \qquad\text{and}\qquad \log(\widetilde\Delta_n(\Pi_n) ) = o(n)
			\end{equation*}
			where $\Pi_n$ is a refinement of $\{\P(\C_n(\d_n)) : \d_n\in\S^n\}$;
		\end{hypo}
		
		\begin{hypo}\label{H:Donsker}
			\begin{equation*}
				\forall n\in\Nset^*, \forall \d_n\in\S^n, \{\r \times\Rset, \r \in \C_n(\d_n)\} \subseteq \Br
			\end{equation*}
			where $\Br$ is a $\Q$-Donsker class.
		\end{hypo}
		Assume moreover that the estimator $(\sigma_n^2)$ involved in Definition~\ref{def:suitable_covering} is such that 
			\begin{equation*}
				|\sigma^2_n-\sigma^2|=O_\Pr(n^{\alpha-1/2}).
			\end{equation*}
		Then 
		\begin{equation*}
			g_n = \mathop{\sum_{A\in\P(\C_n)}}_{\Q_n(A)>0} \E_n[Y \mid \X \in A] \ind{A}
		\end{equation*} 
		is weakly consistent:
		\begin{equation*}
			\ell \left( g^*, g_n\right) = o_\Pr(1).
		\end{equation*}
	\end{theorem}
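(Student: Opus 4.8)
The plan is to control the excess risk through the Empirical Risk Minimization decomposition into an estimation and an approximation term. Since $g^*=\E[Y\mid\X]$ we have $\ell(g^*,g_n)=\L(g_n)-\L(g^*)=\E[(g_n(\X)-g^*(\X))^2]$. I would introduce the population analogue $\bar g_n$ of $g_n$: the function equal to $\E[Y\mid\X\in A]$ on each cell $A\in\P(\C_n)$ with $\Q_n(A)>0$ and equal to $0$ elsewhere. Being piecewise constant on $\P(\C_n)$ and bounded by $L$, it lies in the same class as $g_n$. I would then telescope
\begin{align*}
\L(g_n)-\L(g^*)={}&[\L(g_n)-\L_n(g_n)]+[\L_n(\bar g_n)-\L(\bar g_n)]\\
&+[\L_n(g_n)-\L_n(\bar g_n)]+[\L(\bar g_n)-\L(g^*)].
\end{align*}
The third bracket is nonpositive because $g_n$ minimises the empirical risk over piecewise constant functions on $\P(\C_n)$, cf.\ \eqref{estimator_partition}; it remains to show the two deviation brackets (the \emph{estimation} error) and the last bracket (the \emph{approximation} error) are $o_\Pr(1)$.

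For the approximation error, $\L(\bar g_n)-\L(g^*)=\E[(\bar g_n-g^*)^2]$. On $\c_n^c$ one has $\bar g_n=0$, so this part contributes at most $L^2\Q(\c_n^c)=O_\Pr(n^{\alpha-1/2})$ by \ref{H:collectthcoverage} and \ref{H:bounded}; on $\c_n$ it equals $\sum_{A\in\P(\C_n)}\Pr(\X\in A)\,\V(g^*\mid\X\in A)$. The crux is the identity, valid under \ref{H:indep},
\[
\V(Y\mid\X\in\r)-\sigma^2=\V(g^*(\X)\mid\X\in\r),
\]
which shows that \eqref{eq:Cns_emp} and \eqref{eq:Cni_emp} really bound the dispersion of $g^*$ on each rule. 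Using Proposition~\ref{prop:emp} to replace $\E_n,\V_n$ by $\E,\V$ uniformly over $\C_n$ at rate $O_\Pr(n^{\alpha-1/2})$ (licensed by \ref{H:subsetcoverage} and \ref{H:Donsker}), together with $|\sigma_n^2-\sigma^2|=O_\Pr(n^{\alpha-1/2})$ and the bound $|\E[Y\mid\X\in\r]-\E[Y]|\le 2L$, I would obtain the uniform population bounds
\begin{align*}
\max_{\r\in\C_n^s}\V(g^*\mid\X\in\r)&\le 4L^2\beta_n^2+O_\Pr(n^{\alpha-1/2}),\\
\max_{\r\in\C_n^i}\V(g^*\mid\X\in\r)&\le \varepsilon_n^2+O_\Pr(n^{\alpha-1/2}).
\end{align*}
Next I would transfer these rule-level bounds to the cells (Proposition~\ref{lemma:PC}). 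Each cell $A\subseteq\c_n$ lies in at least one rule, and the cells contained in a rule $\r$ partition it, so the law of total variance gives $\sum_{A\subseteq\r}\Pr(\X\in A)\V(g^*\mid\X\in A)\le\Pr(\X\in\r)\V(g^*\mid\X\in\r)$. Summing over $\r\in\C_n^s$ (resp.\ $\C_n^i$), bounding the left side below by $m(\C_n^s)$ (resp.\ $m(\C_n^i)$) times the cell sum since each cell carries multiplicity at least its minimal redundancy, and the right side above using $\sum_{\r\in\C_n^s}\Pr(\X\in\r)\le M(\C_n^s)$ (resp.\ $M(\C_n^i)$), yields
\begin{align*}
\sum_{A\in\P(\C_n)}\Pr(\X\in A)\V(g^*\mid\X\in A)\le{}&\frac{M(\C_n^s)}{m(\C_n^s)}\max_{\r\in\C_n^s}\V(g^*\mid\X\in\r)\\
&+\frac{M(\C_n^i)}{m(\C_n^i)}\max_{\r\in\C_n^i}\V(g^*\mid\X\in\r).
\end{align*}
The redundancy conditions \eqref{eq:redundancy_significant}--\eqref{eq:redundancy_insignificant} are calibrated precisely so that $\tfrac{M(\C_n^s)}{m(\C_n^s)}\beta_n^2=o_\Pr(1)$ and $\tfrac{M(\C_n^s)}{m(\C_n^s)}n^{\alpha-1/2}=o_\Pr(1)$ (and likewise with $\C_n^i,\varepsilon_n$), whence the approximation error is $o_\Pr(1)$.

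For the estimation error I would bound each deviation bracket by $2\sup_{g\in\Gset_n^\star}|\L_n(g)-\L(g)|$, where $\Gset_n^\star$ is the \emph{deterministic} class of functions that are piecewise constant on some element of $\Pi_n$ and bounded by $L$. Because $\Pi_n$ refines $\{\P(\C_n(\d_n)):\d_n\in\S^n\}$ (\ref{H:partitions}), both $g_n$ and $\bar g_n$ are piecewise constant on the relevant element of $\Pi_n$ and hence lie in $\Gset_n^\star$; working with this class, rather than with cell-wise conditional means, avoids any division by small cell masses. On $\S$ the squared loss is bounded by $4L^2$, so a uniform law of large numbers in the spirit of \citet[Theorem~13.1]{DistributionFree} applies: the sample covering numbers of $\Gset_n^\star$ are governed by the partitioning number, and $\widetilde\M(\Pi_n)=o(n)$, $\log\widetilde\Delta_n(\Pi_n)=o(n)$ force the supremum to $0$ in probability. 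The Donsker assumption \ref{H:Donsker} supplies, through Proposition~\ref{prop:emp}, the uniform control of the empirical masses and averages entering the bound and keeps the contribution of cells with $\Q_n(A)=0$ negligible.

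The main obstacle is the approximation step, where the argument departs from the classical shrinkage route of \citet{DistributionFree}. One must simultaneously (i) pass from the empirical significance and insignificance inequalities to \emph{uniform} population variance bounds on $g^*$ over the rules, which hinges on the uniform rate of Proposition~\ref{prop:emp} under \ref{H:subsetcoverage} and \ref{H:Donsker} and on the rate imposed on $\sigma_n^2$, and (ii) propagate the smallness of $\V(g^*\mid\X\in\r)$ from the rules of $\C_n$ to the far more numerous and geometrically complex cells of $\P(\C_n)$ with no diameter control, the only lever being the redundancy ratios $M/m$. The delicate point is that step (i) produces two distinct error scales — the design scale $\beta_n^2$ or $\varepsilon_n^2$ and the statistical scale $n^{\alpha-1/2}$ — each absorbed by a different branch of the minima in \eqref{eq:redundancy_significant}--\eqref{eq:redundancy_insignificant}; the proof must carry both and verify that the total-variance bookkeeping does not waste the factor $m(\C_n^\bullet)$ on which the cancellation rests.
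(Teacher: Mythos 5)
Your overall architecture is the paper's: an ERM estimation/approximation split, with the estimation error handled exactly as in the paper (embedding $g_n$ and the comparison function in the deterministic class of piecewise constant functions over $\Pi_n$, then covering numbers bounded by $\widetilde\Delta_n(\Pi_n)$ and $\widetilde\M(\Pi_n)$ via \cite{DistributionFree}), and the insignificant rules treated identically. Where you genuinely depart is the significant rules. The paper lower-bounds the between-group variance $\B_n^s$ by a sum over $\r\in\C_n^s$ (Jensen), injects the significance inequality in the direction ``large centered mean relative to variance'', converts back to the within-group variance $\W_n^s$, and solves the resulting self-referential inequality, yielding $\W_n^s\leq(\V(g^*(\X)\ind{\c_n}(\X))+\beta_n^{-2}\Delta'_n+\beta_n^{-2}|\sigma_n^2-\sigma^2|+\Delta_n)/(1+\beta_n^{-2}\,m(\C_n^s)/M(\C_n^s))$. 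You instead observe that under \ref{H:bounded} one has $|\E_n[Y\mid\X\in\r]-\E_n[Y]|\leq 2L$, so the significance inequality \emph{directly} forces $(\V_n(Y\mid\X\in\r)-\sigma_n^2)_+\leq 4L^2\beta_n^2$ uniformly over $\C_n^s$, which transfers to $\V(g^*(\X)\mid\X\in\r)\leq 4L^2\beta_n^2+O_{\Pr^*}(n^{\alpha-1/2})$ by Corollary~\ref{prop:expectation} and the rate on $\sigma_n^2$; both rule types then receive the same law-of-total-variance bookkeeping (your observation that each cell of $\P(\C_n)$ meeting $\r$ is contained in $\r$, with multiplicity at least $m(\C_n^\bullet)$ and $\sum_{\r}\Pr(\X\in\r)\leq M(\C_n^\bullet)$, is correct and also bypasses the paper's detour through $\C_n^{i\setminus s}$ and the coarser partition $\P(\C_n^s)\cup\P(\C_n^{i\setminus s})$). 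Your version is shorter and symmetric; the paper's avoids the crude $2L$ bound and would survive in settings where the centered conditional means are not uniformly bounded. Both consume \eqref{eq:redundancy_significant}--\eqref{eq:redundancy_insignificant} in the same way: the $\beta_n^{-2}$ (resp.\ $\varepsilon_n^{-2}$) branch absorbs the design scale and the $n^{1/2-\alpha}$ branch the statistical scale, exactly as you note.

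One concrete flaw needs fixing: your $\bar g_n$ is defined as $\E[Y\mid\X\in A]$ only on cells with $\Q_n(A)>0$ and $0$ elsewhere. Then your claim that $\E[(\bar g_n(\X)-g^*(\X))^2\ind{\c_n}(\X)]$ equals the within-cell variance sum is false: cells with $\Q(A)>0$ but $\Q_n(A)=0$ contribute $\E[g^*(\X)^2\ind{A}(\X)]$, not $\Pr(\X\in A)\V(g^*(\X)\mid\X\in A)$, and nothing in the hypotheses controls the total $\Q$-mass of empirically empty cells (the partition $\P(\C_n)$ can have many tiny cells; the paper itself flags empty cells as a practical issue in Section~\ref{sec:comments}). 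The repair is one line: put the true conditional mean on \emph{every} $\Q$-positive cell (or, as the paper does, keep the infimum over the whole class $\Gset_n$ and exhibit a good element); the ERM comparison $\L_n(g_n)\leq\L_n(\bar g_n)$ is unaffected since empirically empty cells carry no sample points. Relatedly, your closing claim that \ref{H:Donsker} and Proposition~\ref{prop:emp} keep ``the contribution of cells with $\Q_n(A)=0$ negligible'' in the estimation bound is a misattribution: Proposition~\ref{prop:emp} only controls conditional moments uniformly over sets with $\Q_n\geq n^{-\alpha}$ and enters solely through the $\Delta_n,\Delta'_n$ terms of the approximation error; the estimation error uses only \ref{H:partitions} and boundedness.
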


\begin{remark}\label{rem:hyperrect}
Many algorithms as CART generate rules as in \eqref{rule_form} where each simple condition $(\X\in c)$ consists in some distinct coordinate of $\X$ belonging to an interval of $\Rset$. In Section \ref{sec:Application} we use such rule generators for which \ref{H:Donsker} is automatically satisfied. Indeed let $\mathcal{H}$ be the set of all hyperrectangles of $\Rset^d$:
\begin{equation*}
\mathcal{H} := \Big\{I_1\times\ldots\times I_d\subset \Rset^d : I_i \text{ is an interval of } \Rset \Big\} \cup \{\emptyset \}.
\end{equation*}
Any set of rules $\C_n$ such that $\C_n \subseteq \mathcal{H}$, which is always the case of the sets of rules considered in Section \ref{sec:Application}, fulfills  \ref{H:Donsker} since it is a VC class (see \citealp{wenocur1981}) and VC classes are $\Q$-Donsker \citep[see for example][Lemma 19.15 and comments]{VanDerVaart00}.
\end{remark}

\section{Proof of Theorem~\ref{th:consistance}}\label{sec:proof}
In order to prove the main theorem, we need some preliminary results based on the notions of $\Q$-Donsker class and outer probability. The outer probability, defined for $A \subseteq \Omega$ by $\Pr^*(A):= \inf \left\{ \Pr(\tilde A): A \subset \tilde A, \tilde A \in \A \right\}$, is introduced to handle functions which are not necessarily measurable. The usual notion of boundedness in probability for sequences of random variables is generalized because sequences of maps are considered  with values in metric spaces which are not Euclidean spaces (thus bounded and closed sets need not be compact) and which are not guaranteed to be measurable. See \cite[Chapter~18]{VanDerVaart00} for details. 

\begin{definition}\cite[Chapter 18]{VanDerVaart00}\label{def:as-tight} 
	A sequence $(M_n)_{n\in\Nset}$ of maps defined on $\Omega$ and with values in a metric space $(\D, d)$ is said to be asymptotically tight if 
	\begin{equation*}
		\forall \varepsilon > 0, \exists K\subset \D \text{ compact}, \forall \delta > 0, \limsup_{n \rightarrow \infty} \Pr^*(M_n\notin K^\delta) < \varepsilon, 
	\end{equation*}
	with $K^\delta = \{y\in\D : d(y,K)<\delta\}$. 
\end{definition}

\begin{remark}\label{rem:as-tight-R}
	If $\D = \mathbb R$, $(M_n)$ is asymptotically tight if and only if 
	\begin{equation*}
		\forall \varepsilon>0, \exists M>0 \text{ such that }\limsup_{n\rightarrow \infty} \Pr^*(|M_n| > M) < \varepsilon.
	\end{equation*}
\end{remark}

The notation $O_{\Pr^*}(1)$ stands for \textit{asymptotically tight} instead of the usual $O_\Pr(1)$ (\textit{bounded in probability}). 

For $f : \S \rightarrow \mathbb R$ in $\L ^1(\Q)$ we define $v_nf := \sqrt n (\int f d\Q_n - \int f d\Q)$ and consider the empirical process indexed by a set $\F$ of such functions: $\{v_n f : f\in\F\}$. 

\begin{definition}\cite[Section 19.2]{VanDerVaart00}\label{def:Donsker-class} 
	A class of functions $\F$ is called $\Q$-Donsker if the sequence of processes $\{ v_n f: f \in \F \}$ converges in distribution to a tight limit process in the space $\ell^\infty(\F)$.
\end{definition}
The limit process is then a $\Q$-Brownian bridge.

\begin{definition}
	A class of sets $\Br \subseteq \Br_\S$ is called $\Q$-Donsker if $\I_\Br := \{ \ind{A} : A \in \Br \}$ is a $\Q$-Donsker class of functions.
\end{definition}

If $\F$ is a $\Q$-Donsker class of functions, then the empirical process $((v_n f)_{f\in\F})_{n\in\Nset}$ is asymptotically tight as a sequence of maps with values in $\ell^\infty(\F)$ (this is a consequence of Prohorov's Theorem adapted to this framework~--~see \citealp[Theorem~18.12]{VanDerVaart00}). Keeping in mind that a compact set in $\ell^\infty(\F)$ is bounded, we have:
\begin{proposition}\label{prop:donsker}
	Let $\F$ be a $\Q$-Donsker class of functions. Then
	$$ \left\| \Q_n - \Q \right\|_{\F} = O_{\Pr^*}(n^{-1/2}),$$
	where for any $v : \F \rightarrow \Rset$, $\left\| v \right\|_{\F} = \sup_{f \in \F} \left| v(f) \right|$.
\end{proposition}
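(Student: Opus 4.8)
The plan is to reduce the claimed rate directly to the asymptotic tightness of the empirical process in $\ell^\infty(\F)$. First I would observe that, by the definition of $v_n$, the quantity to be controlled is merely a rescaling of the sup-norm of the empirical process: for every $f\in\F$ one has $(\Q_n-\Q)(f)=n^{-1/2}v_nf$, hence
\begin{equation*}
\|\Q_n-\Q\|_\F=\sup_{f\in\F}|(\Q_n-\Q)(f)|=n^{-1/2}\sup_{f\in\F}|v_nf|=n^{-1/2}\|v_n\|_\F,
\end{equation*}
where I write $v_n=(v_nf)_{f\in\F}$ for the empirical process seen as a (possibly non-measurable) map into $\ell^\infty(\F)$. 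Thus it suffices to prove that the real-valued sequence $\|v_n\|_\F$ is asymptotically tight, that is $\|v_n\|_\F=O_{\Pr^*}(1)$.

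Next I would invoke the $\Q$-Donsker hypothesis through the consequence recalled just before the statement: since $\F$ is $\Q$-Donsker, the sequence of maps $(v_n)$ is asymptotically tight in $\ell^\infty(\F)$ (Prohorov's theorem in the form of \citealp[Theorem~18.12]{VanDerVaart00}). Applying Definition~\ref{def:as-tight} with a fixed $\varepsilon>0$ yields a compact set $K\subset\ell^\infty(\F)$ such that $\limsup_n\Pr^*(v_n\notin K^\delta)<\varepsilon$ for every $\delta>0$.

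The key elementary step is then to turn tightness in the function space into boundedness of the norm. A compact subset of $\ell^\infty(\F)$ is bounded, so there is $M_0$ with $\sup_{z\in K}\|z\|_\F\le M_0$; taking $\delta=1$, any element of the enlargement $K^1$ has norm at most $M_0+1=:M$. Consequently $\{\|v_n\|_\F>M\}\subseteq\{v_n\notin K^1\}$, and monotonicity of the outer probability gives $\Pr^*(\|v_n\|_\F>M)\le\Pr^*(v_n\notin K^1)$, whence $\limsup_n\Pr^*(\|v_n\|_\F>M)<\varepsilon$. By Remark~\ref{rem:as-tight-R} this is exactly the assertion $\|v_n\|_\F=O_{\Pr^*}(1)$, and combining with the first display concludes the proof.

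The only delicate point, rather than a genuine obstacle, is the bookkeeping with the outer probability $\Pr^*$: because $\|v_n\|_\F$ need not be measurable, one must pass through the inclusion of events above and use monotonicity of $\Pr^*$ rather than an equality of probabilities. Everything else is a direct unwinding of the definitions, the substantive work being hidden in the cited Prohorov-type result that already guarantees the asymptotic tightness of $(v_n)$ in $\ell^\infty(\F)$.
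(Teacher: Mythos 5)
Your proof is correct and follows essentially the same route as the paper, which derives the result from the asymptotic tightness of the empirical process in $\ell^\infty(\F)$ (via the Prohorov-type Theorem~18.12 of \citealp{VanDerVaart00}) combined with the fact that compact subsets of $\ell^\infty(\F)$ are bounded. Your explicit bookkeeping with the enlargement $K^1$, the inclusion of events, and the monotonicity of $\Pr^*$ merely spells out the details the paper leaves implicit in the sentence preceding the proposition.
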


\begin{remark}
	If $\Br \subseteq \Br_\S$ is a $\Q$-Donsker class of sets, where $\Br_\S$ is the Borel set on $\S$, then 
	$$ \left\| \Q_n - \Q \right\|_\Br = O_{\Pr^*}(n^{-1/2}),$$
	where for any $v : \Br \rightarrow \Rset$, $\left\| v \right\|_\Br = \sup_{A \in \Br} \left| v(A) \right|$.
\end{remark}

\begin{remark}\label{rem:P}
	It can be checked that if $(Z_n)_{n\in \Nset}$ is a sequence of non-negative random variables, $(a_n)_{n \in \Nset} \in (\Rset^+)^\Nset$ such that $a_n = o_\Pr(1)$ and $(M_n)_{n\in \Nset}$ is a sequence of maps (non necessarily measurable) such that $M_n = O_{\Pr^*}(1)$ and $Z_n \leq a_nM_n$ for any $n$, then $Z_n \underset{n \to +\infty}{\overset{\Pr}{\longrightarrow}} 0$.
\end{remark}

\subsection{Empirical estimation of conditional expectations}\label{sec:remind}
We shall also prove and use the following proposition, which is inspired by the work of \cite{Grunewalder18} (Proposition 3.2).
\begin{proposition}\label{prop:emp}
	Let $\Br \subseteq \Br_{\S}$ and let $\F_\Br := \{ f \ind{A} : f \in \F, A \in \Br \}$ where $\F$ is a set of functions in $\L ^1(\Q)$ uniformly bounded. If $\Br$ and $\F_\Br$ are $\Q$-Donsker classes then for any $\alpha \in [0, 1/2)$ and with $\Br_n := \{A \in \Br, \Q_n(A) \geq n^{-\alpha}\}$ we have
	\begin{equation*}
	\sup \limits_{f \in \F} \sup \limits_{A \in \Br_n} \big| \E_n \left[ f \mid A \right] - \E \left[f \mid A \right] \big| = O_{\Pr^*}(n^{\alpha - 1/2}).
	\end{equation*}
\end{proposition}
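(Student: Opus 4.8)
The plan is to reduce the control of $\E_n[f\mid A]-\E[f\mid A]$ to uniform $O_{\Pr^*}(n^{-1/2})$ bounds on the empirical processes indexed by $\F_\Br$ and $\Br$, which are available from the $\Q$-Donsker hypothesis through Proposition~\ref{prop:donsker}. The starting point is the algebraic identity
\begin{equation*}
	\E_n[f\mid A]-\E[f\mid A]
	= \frac{\Q_n(f\ind{A})}{\Q_n(A)} - \frac{\Q(f\ind{A})}{\Q(A)},
\end{equation*}
where I abuse notation by writing $\Q_n(f\ind{A})$ for $\int f\ind{A}\,d\Q_n$, etc. First I would put the two fractions over a common denominator and split the numerator into a term where the empirical fluctuation of the numerator is isolated and a term where that of the denominator is isolated:
\begin{equation*}
	\E_n[f\mid A]-\E[f\mid A]
	= \frac{\Q_n(f\ind{A})-\Q(f\ind{A})}{\Q_n(A)}
	- \frac{\Q(f\ind{A})}{\Q(A)}\cdot\frac{\Q_n(A)-\Q(A)}{\Q_n(A)}.
\end{equation*}

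Next I would bound each piece uniformly over $f\in\F$ and $A\in\Br_n$. Since $\F_\Br$ is $\Q$-Donsker, Proposition~\ref{prop:donsker} gives $\sup_{f,A}|\Q_n(f\ind{A})-\Q(f\ind{A})| = O_{\Pr^*}(n^{-1/2})$, and since $\Br$ is $\Q$-Donsker it gives $\sup_{A}|\Q_n(A)-\Q(A)| = O_{\Pr^*}(n^{-1/2})$. The defining restriction $A\in\Br_n$ means $\Q_n(A)\ge n^{-\alpha}$, so $1/\Q_n(A)\le n^{\alpha}$, which converts each $n^{-1/2}$ factor into the target rate $n^{\alpha-1/2}$. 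The factor $\Q(f\ind{A})/\Q(A) = \E[f\mid A]$ in the second term is bounded by the uniform bound $\|\F\|_\infty$ on $\F$, so it contributes only a constant. Combining these, the whole expression is $O_{\Pr^*}(n^{\alpha-1/2})$.

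The one genuinely delicate point is the passage from $\Q_n(A)\ge n^{-\alpha}$ (which I control) to a lower bound on $\Q_n(A)$ that I can use, and more subtly the possibility that $\Q(A)$ is tiny or zero: I have a lower bound on the empirical mass but not \emph{a priori} on the true mass $\Q(A)$. This is why only the empirical denominator $\Q_n(A)$ appears after the rearrangement above — I must be careful to isolate $\Q_n(A)$ rather than $\Q(A)$ in every denominator, so that the $n^{-\alpha}$ floor can be applied without needing a lower bound on $\Q(A)$. The term $\E[f\mid A]=\Q(f\ind{A})/\Q(A)$ is harmless because it is a ratio that lies in $[-\|\F\|_\infty,\|\F\|_\infty]$ whenever it is defined, and on $\Br_n$ we have $\Q_n(A)>0$ hence (for large $n$, using the uniform convergence of $\Q_n(A)$ to $\Q(A)$) $\Q(A)>0$ as well, so the quantity is well defined. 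Making this last well-definedness argument rigorous — ensuring uniformly over $A\in\Br_n$ that $\Q(A)$ stays bounded away from $0$, or else handling the exceptional sets — is the main obstacle, and I expect it to be resolved precisely by the Donsker bound $\sup_A|\Q_n(A)-\Q(A)|=O_{\Pr^*}(n^{-1/2})=o_{\Pr^*}(n^{-\alpha})$, which forces $\Q(A)\ge \Q_n(A)-o_{\Pr^*}(n^{-\alpha})\ge n^{-\alpha}-o_{\Pr^*}(n^{-\alpha})>0$ uniformly with probability tending to one.
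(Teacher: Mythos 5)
Your proof is correct and takes essentially the same route as the paper's: the identical algebraic decomposition isolating the empirical denominator $\Q_n(A)$ (the paper writes it with the common denominator $\Q(A)\Q_n(A)$ before splitting, which is the same thing), the two uniform $O_{\Pr^*}(n^{-1/2})$ bounds from Proposition~\ref{prop:donsker} applied to $\F_\Br$ and to $\Br$, the floor $\Q_n(A)\ge n^{-\alpha}$ converting them to the rate $n^{\alpha-1/2}$, and the bound $|\E[f\mid A]|\le \sup_{f\in\F,\x\in\S}|f(\x)|$. The well-definedness point you flag at the end is dispatched even more simply in the paper --- $\Q_n(A)>0$ already implies $\Q(A)>0$ (almost surely, a sample point lies in $A$) --- though your Donsker-based uniform lower bound on $\Q(A)$ over $\Br_n$ is a valid alternative.
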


\begin{corollary}\label{prop:expectation}
	Let $\Br \subseteq \Br_\S$ be a $\Q$-Donsker class. If $Y$ is bounded then for any $i \in \Nset$ and any $\alpha \in [0, 1/2)$, with $\Br_n := \{A \in \Br, \Q_n(A) \geq n ^{-\alpha}\}$ we have
	\begin{equation}\label{eq:expectation}
	\sup \limits_{A \in \Br_n} \left| \E_n \left[ Y^i \mid (\X , Y) \in A \right] - \E \left[ Y^i \mid (\X , Y) \in A \right] \right| = O_{\Pr^*}(n^{\alpha - 1/2}),
	\end{equation}
	and 
	\begin{equation}\label{eq:variance}
	\sup \limits_{A \in \Br_n} \left| \V_n \left[ Y \mid (\X , Y) \in A \right] - \V\left[ Y \mid (\X , Y) \in A \right] \right| = O_{\Pr^*}(n^{\alpha - 1/2}).
	\end{equation}
\end{corollary}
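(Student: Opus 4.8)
The plan is to derive both displays from Proposition~\ref{prop:emp} by making judicious choices of the function class $\F$, and to reduce the variance estimate to the expectation estimate through an elementary algebraic decomposition.

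For \eqref{eq:expectation}, I would fix $i\in\Nset$ and apply Proposition~\ref{prop:emp} with the singleton class $\F=\{f_i\}$, where $f_i(\x,y)=y^i$. Since $Y$ is bounded by assumption~\ref{H:bounded}, we have $|f_i|\le L^i$ on $\S$, so $\F$ is uniformly bounded and contained in $\L^1(\Q)$; moreover $\E_n[f_i\mid A]=\E_n[Y^i\mid (\X,Y)\in A]$ and likewise for $\E$, so the supremum over $f\in\F$ in Proposition~\ref{prop:emp} reduces to exactly the quantity appearing in \eqref{eq:expectation}. The conclusion is then immediate, provided the hypothesis of Proposition~\ref{prop:emp} is met, namely that $\F_\Br=\{y^i\ind{A}:A\in\Br\}$ is a $\Q$-Donsker class.

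The only nonroutine point is precisely this Donsker property, which must be obtained from the sole assumption that $\Br$ is $\Q$-Donsker. By hypothesis $\I_\Br=\{\ind{A}:A\in\Br\}$ is a $\Q$-Donsker class of functions, and $\F_\Br$ is obtained from $\I_\Br$ by multiplying every element by the single fixed bounded measurable function $(\x,y)\mapsto y^i$. Since the product of a uniformly bounded Donsker class with a fixed bounded measurable function is again Donsker (a standard preservation result, see \citealp[Section~2.10]{VanDerVaart00}), the class $\F_\Br$ is $\Q$-Donsker and Proposition~\ref{prop:emp} applies, yielding \eqref{eq:expectation}. I expect this verification to be the main obstacle; everything else is direct substitution and elementary algebra.

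For \eqref{eq:variance}, I would use $\V_n[Y\mid A]=\E_n[Y^2\mid A]-\E_n[Y\mid A]^2$ together with the analogous identity for $\V$, and write
\begin{align*}
\V_n[Y\mid A]-\V[Y\mid A]
&=\bigl(\E_n[Y^2\mid A]-\E[Y^2\mid A]\bigr)\\
&\quad-\bigl(\E_n[Y\mid A]-\E[Y\mid A]\bigr)\bigl(\E_n[Y\mid A]+\E[Y\mid A]\bigr).
\end{align*}
By \eqref{eq:expectation} with $i=2$ the first bracket is $O_{\Pr^*}(n^{\alpha-1/2})$ uniformly over $A\in\Br_n$; by \eqref{eq:expectation} with $i=1$ the difference factor in the second term is likewise $O_{\Pr^*}(n^{\alpha-1/2})$ uniformly, while the sum factor satisfies $|\E_n[Y\mid A]+\E[Y\mid A]|\le 2L$ uniformly since $Y$ is bounded by $L$. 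The triangle inequality then gives \eqref{eq:variance}.
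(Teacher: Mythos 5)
Your proof is correct and takes essentially the same route as the paper's: \eqref{eq:expectation} comes from Proposition~\ref{prop:emp} applied to the singleton class $\F=\{(\x,y)\mapsto y^i\}$, and \eqref{eq:variance} follows by writing $\V_n[Y\mid A]-\V[Y\mid A]$ as a difference of conditional second and first moments and using the $L$-boundedness of $Y$. You are in fact slightly more careful than the paper, whose proof only remarks that $\{f_i\}$ is finite hence Donsker: the hypothesis of Proposition~\ref{prop:emp} that really needs verification is that $\F_\Br=\{y^i\ind{A}:A\in\Br\}$ is $\Q$-Donsker, and your preservation argument (a uniformly bounded Donsker class multiplied by a fixed bounded measurable function remains Donsker --- a result found in Section~2.10 of van der Vaart and Wellner's 1996 monograph rather than in the paper's cited reference) supplies precisely this step.
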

Proofs of these results are deferred to the appendix.

It seems that the result of Corollary \ref{prop:expectation}, which is of independent interest, does not appear as such in the existing literature. As a first application of Corollary \ref{prop:expectation}, we show that any sequence of partitions with shrinking cell diameters is a suitable covering. We define the diameter of a cell $\r$ as $\text{Diam}(\r)=\sup_{\x\in \r,\,\x'\in \r} \|\x-\x'\|$, where $\|\cdot\|$ is any norm of $\Rset^d$.

\begin{proposition}\label{prop:diam}
	Consider a sequence $(\P_n)_{n\in\Nset}$ of data-dependent partitions that satisfies the coverage condition \ref{H:subsetcoverage} with $\alpha\in[0,\frac 12)$ and such that $$\bigcup_{n\in\Nset^*}\bigcup_{\d_n\in(\Rset^d)^n}\{\r\times\Rset : \r\in\P_n(\d_n)\}$$ is a.s. a $\Q$-Donsker class. Suppose that $\sigma^2$ is known, $g^*$ is uniformly continuous and
	\begin{equation}\label{eqmax}
		\max_{\r\in\P_n}\text{Diam}(\r)=o_\Pr(1).
	\end{equation}
	Then the sequence $(\P_n)$ is $\alpha$-suitable.
\end{proposition}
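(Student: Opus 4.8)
The plan is to verify directly that $(\P_n)$ meets the four requirements of Definition~\ref{def:suitable_covering}, viewing the partition $\P_n$ as the collection $\C_n$. Two of them are immediate. The set coverage condition \ref{H:subsetcoverage} is part of the hypotheses. The collection coverage condition \ref{H:collectempcoverage} is trivial: a partition of $\Rset^d$ has $\c_n = \cup_{\r\in\P_n}\r = \Rset^d$, so $\Q_n(\c_n^c)=0$. The redundancy condition \ref{H:redundancy} is also essentially free: since $\C_n^s$ and $\C_n^i$ are subcollections of the partition $\P_n$, they consist of pairwise disjoint sets, so (when nonempty) $M(\C_n^s)=m(\C_n^s)$ and $M(\C_n^i)=m(\C_n^i)$ and both ratios in \eqref{eq:redundancy_significant}--\eqref{eq:redundancy_insignificant} equal $1$; as the bounds $\beta_n^{-2}\wedge n^{1/2-\alpha}$ and $\varepsilon_n^{-2}\wedge n^{1/2-\alpha}$ diverge (because $\alpha<1/2$ and $\beta_n,\varepsilon_n=o_\Pr(1)$), \ref{H:redundancy} holds for any admissible $\beta_n,\varepsilon_n$. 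The whole difficulty therefore lies in the significance condition \ref{H:significance}.

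For \ref{H:significance} I take $\sigma_n^2=\sigma^2$ (known, hence trivially consistent) and I will show that, for a suitable $\varepsilon_n=o_\Pr(1)$, \emph{every} cell satisfies the insignificant inequality \eqref{eq:Cni_emp}, so that $\C_n=\C_n^s\cup\C_n^i$ holds no matter how $\beta_n=o_\Pr(1)$ is chosen (each cell not in $\C_n^s$ then falls into $\C_n^i$). The starting point is the deterministic identity, valid for any cell $\r$ with $\Pr(\X\in\r)>0$,
\begin{equation*}
	\V(Y\mid\X\in\r) = \V(g^*(\X)\mid\X\in\r) + \sigma^2,
\end{equation*}
which follows from \ref{H:indep} ($Z$ independent of $\X$, $\E[Z]=0$, $\V(Z)=\sigma^2$): the cross term vanishes and $\E[Z^2\mid\X\in\r]=\sigma^2$. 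Consequently $(\V(Y\mid\X\in\r)-\sigma^2)_+=\V(g^*(\X)\mid\X\in\r)\ge 0$, so it suffices to control the conditional variance of $g^*$ over the cells.

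The uniform control combines the geometry with Corollary~\ref{prop:expectation}. First, by uniform continuity of $g^*$, for every $\eta>0$ there is $\delta>0$ with $|g^*(\x)-g^*(\x')|\le\eta$ whenever $\|\x-\x'\|<\delta$; hence on the event $\{\max_{\r\in\P_n}\text{Diam}(\r)<\delta\}$ the oscillation of $g^*$ on each cell is at most $\eta$, so $\sup_{\r\in\P_n}\V(g^*(\X)\mid\X\in\r)\le\eta^2$. Since \eqref{eqmax} makes that event have probability tending to $1$, this gives $\sup_{\r\in\P_n}\big(\V(Y\mid\X\in\r)-\sigma^2\big)=o_\Pr(1)$. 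Second, the cells all belong to the a.s.\ $\Q$-Donsker class of the hypothesis and $Y$ is bounded by \ref{H:bounded}, so Corollary~\ref{prop:expectation} (specifically \eqref{eq:variance}) yields $\sup_{\r\in\P_n}|\V_n(Y\mid\X\in\r)-\V(Y\mid\X\in\r)|=\Ops(n^{\alpha-1/2})=o_\Pr(1)$. Combining the two, $\sup_{\r\in\P_n}(\V_n(Y\mid\X\in\r)-\sigma^2)_+=o_\Pr(1)$, so I set
\begin{equation*}
	\varepsilon_n := \sup_{\r\in\P_n}\sqrt{\big(\V_n(Y\mid\X\in\r)-\sigma^2\big)_+}=o_\Pr(1),
\end{equation*}
and by construction every cell meets \eqref{eq:Cni_emp}. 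This establishes \ref{H:significance} and completes the verification of $\alpha$-suitability.

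The main obstacle is the uniform control of the conditional variance of $g^*$ over the \emph{data-dependent} cells. The deterministic identity reduces it to bounding $\V(g^*(\X)\mid\X\in\r)$, but one must pass from the pointwise $\varepsilon$--$\delta$ statement of uniform continuity to a bound uniform over a random family of cells; this is exactly where \eqref{eqmax} enters, turning ``all diameters small'' into an event of probability close to $1$. The transfer from the population conditional variance to its empirical counterpart, uniformly over all admissible cells, is the content of Corollary~\ref{prop:expectation}, and is the place where the $\Q$-Donsker assumption and the boundedness of $Y$ are used.
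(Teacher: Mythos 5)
Your proof is correct and follows essentially the same route as the paper's: both reduce the problem to the significance condition \ref{H:significance}, take $\sigma_n^2=\sigma^2$, define $\varepsilon_n$ as the maximum over cells of $\sqrt{(\V_n(Y\mid\X\in\r)-\sigma^2)_+}$, and control it by combining Corollary~\ref{prop:expectation} (uniform empirical-to-population variance approximation over the Donsker class) with the identity $\V(Y\mid\X\in\r)=\V(g^*(\X)\mid\X\in\r)+\sigma^2$ and a uniform-continuity bound on the conditional variance of $g^*$ over shrinking cells. The only cosmetic differences are that you bound $\V(g^*(\X)\mid\X\in\r)$ directly by the squared oscillation via an $\varepsilon$--$\delta$ event argument where the paper uses a symmetrization with two independent copies and the modulus of continuity, and that you spell out the collection coverage and redundancy conditions that the paper treats as immediate.
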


\begin{proof}
	Let us show that each cell is either significant or insignificant. Thanks to Condition~\ref{H:subsetcoverage}, Corollary~\ref{prop:expectation} (Eq.~\eqref{eq:variance}) and Remark~\ref{rem:P},
	\begin{equation}\label{eq:mvn}
	\max_{\r\in\P_n}| \V_n(Y | \X \in \r ) - \V(Y \mid \X \in \r ) | =O_{\Pr}(n^{\alpha-1/2}).
	\end{equation}
	Moreover $\V(Y \mid \X \in \r) = \V(g^*(\X)\mid \X\in \r) +\sigma^2$. Thus, as the redundancy condition \ref{H:redundancy} is automatically satisfied for cells of a partition, the desired result will follow if we check that $\varepsilon_n = o_\Pr(1)$ with 
	\begin{equation*}
	\varepsilon_n := \max \limits_{\r \in \P_n} \sqrt{(\V_n(Y | \X \in \r ) - \sigma^2)_+}.
	\end{equation*}
	
	From~\eqref{eq:mvn} we remark that
	\begin{equation*}
	\varepsilon_n \le \max \limits_{\r \in \P_n} \sqrt{\V(g^*(\X) \mid \X \in \r )}+O_{\Pr}(n^{\alpha/2-1/4})\;.
	\end{equation*}
	
	For all $n$, if $\r\in\P_n$, then $\r\times \Rset\in\Br_\S$.
	We denote $\X_\r$ and $\X_\r'$ two independent variables distributed as $\X$ given that $\X \in \r$. We obtain
	\begin{align*}
	\V(g^*(\X) \mid \X \in \r )&=\V(g^*(\X_\r))\\
	&=\tfrac12\V\big(g^*(\X_\r)-g^*(\X_\r')\big)\\
	&\le\tfrac12\E\left[(g^*(\X_\r)-g^*(\X_\r'))^2\right] \,.
	\end{align*}
	Thus, if we denote $w$ the modulus of continuity of $g^*$, we get
	$$ \sqrt{\V(g^*(\X) \mid \X \in \r )}\le 2^{-1/2}w(\text{Diam}(\r)).$$
	By uniform continuity, the condition~\eqref{eqmax} implies that 
	$$ \varepsilon_n\le 2^{-1/2} \max_{\r\in\P_n}\big(w(\text{Diam}(\r))\big)+O_\Pr(n^{\alpha/2-1/4})=o_\Pr(1)\,.$$ 
	Thus, from~\eqref{eq:Cni_emp}, each cell which is not significant is insignificant and the corresponding covering sequence is $\alpha$-suitable.
\end{proof}

%

\subsection{Estimation-approximation decomposition}

			Let $\Gset_n$ denote the set of piecewise constant functions on the partition $\P(\C_n) \cup \{\c_n^c\}$ such that $\forall g\in\Gset_n, \forall \x\in\Rset^d, |g(\x)|\leq L$ and $\forall g\in\Gset_n, \forall \x\in\c_n^c, g(\x) = 0$.
		
	%
		
		The excess risk $\ell(g^*,g_n)$ can be decomposed into two terms  mimicking  Lemma 10.1 of \cite{DistributionFree}. First notice that under the conditions of   Theorem~\ref{th:consistance} on $g_n$ we have
		\begin{equation}\label{eq:gnERMGn}
			\frac 1n \sum_{i=1}^n (g_n(\X_i) - Y_i)^2 \ind{\c_n}(\X_i) \leq \frac 1n \sum_{i=1}^n (g(\X_i) - Y_i)^2 \ind{\c_n}(\X_i), \qquad \forall g\in\Gset_n.
		\end{equation}
		Now, since moreover $g_n\in\Gset_n$, 
		\begin{align*}
			\ell (g^*, g_n) 	&= \E\bigl[ \bigl(g_n(\X) - g^*(\X)\bigr)^2 \bigr] \\
						&= \E\bigl[ \bigl(g_n(\X) - Y)^2 \bigr] - \E \bigl[ \bigl(g^*(\X) - Y\bigr)^2 \bigr] \\
						&= \E\bigl[ \bigl(g_n(\X) - Y)^2 \bigr] - \inf_{g\in\Gset_n} \E \bigl[ \bigl(g(\X) - Y\bigr)^2 \bigr] \\
						&\quad + \inf_{g\in\Gset_n} \E \bigl[ \bigl(g(\X) - Y\bigr)^2 \bigr] - \E \bigl[ \bigl(g^*(\X) - Y\bigr)^2 \bigr] \\
						&= \sup_{g\in\Gset_n} \biggl\{ \E\bigl[ \bigl(g_n(\X) - Y)^2 \bigr] - \frac 1n \sum_{i=1}^n (g_n(\X_i) - Y_i)^2 \\
						&\quad+ \frac 1n \sum_{i=1}^n (g_n(\X_i) - Y_i)^2 - \frac 1n \sum_{i=1}^n (g(\X_i) - Y_i)^2 \\
						&\quad + \frac 1n \sum_{i=1}^n (g(\X_i) - Y_i)^2 - \E \bigl[ \bigl(g(\X) - Y\bigr)^2 \bigr] \biggr\} \\
						&\quad + \inf_{g\in\Gset_n} \E \bigl[ \bigl(g(\X) - g^*(\X)\bigr)^2 \bigr] \\
						&\leq 2 \sup_{g\in\Gset_n} \bigl|\E\bigl[ \bigl(g(\X) - Y)^2 \bigr] - \frac 1n \sum_{i=1}^n (g(\X_i) - Y_i)^2\bigr] \bigr|\\
						&\quad + \underbrace{\frac 1n \sum_{i=1}^n (g_n(\X_i) - Y_i)^2\ind{\c_n^c}(\X_i)}_{\leq 4L^2 \Q_n(\c_n^c)} \\
						&\quad + \inf_{g\in\Gset_n} \E \bigl[ \bigl(g(\X) - g^*(\X)\bigr)^2 \bigr].
		\end{align*}
		Up to the term $4L^2 \Q_n(\c_n^c)$ which converges to zero by assumption, this is the standard decomposition of the risk into the estimation error and the approximation error. To prove the theorem it is sufficient to prove that
		\begin{equation}\label{proof:approx}
			\inf \limits_{g \in \Gset_n} \E \left[ \left(g(\X) - g^*(\X) \right)^2 \right] = o_\Pr(1)
		\end{equation}
		and 
		\begin{equation}\label{proof:estim}
			\sup_{g\in\Gset_n} \bigl|\E\bigl[ \bigl(g(\X) - Y)^2 \bigr] - \frac 1n \sum_{i=1}^n (g(\X_i) - Y_i)^2\bigr] \bigr| = o_\Pr(1).
		\end{equation}

\subsection{Approximation error}

		Let us define $\C_n^{i\setminus s} = \{ \r\setminus\c_n^s : \r\in\C_n^i \}$ where $\c_n^s = \cup_{\r \in \C_n^s} \r$. Notice that $\P(\C_n^{i\setminus s}) = \Bigl\{ A\setminus\c_n^s : A\in\P(\C_n^i) \Bigr\}$. Indeed, by Proposition~\ref{lemma:PC}:
	\begin{align}\label{eq:PCni-s}
		\P(\C_n^{i\setminus s}) &= \Bigl\{ \bigcap_{\r \in \tilde\C} \r \setminus \bigcup_{\r \in \C_n^{i\setminus s}\setminus\tilde\C} \r : \tilde\C \subseteq \C_n^{i\setminus s} \Bigr\} \nonumber\\
		&= \Bigl\{ \bigcap_{\r \in \tilde{\C^i}} (\r\setminus\c_n^s) \setminus \bigcup_{\r \in \C_n^i\setminus\tilde{\C^i}} (\r\setminus\c_n^s) : \tilde{\C^i} \subseteq \C_n^i \Bigr\} \nonumber\\
		&= \Bigl\{ \bigl(\bigcap_{\r \in \tilde{\C^i}} \r \setminus \bigcup_{\r \in \C_n^i\setminus\tilde{\C^i}} \r\bigr)\setminus\c_n^s : \tilde{\C^i} \subseteq \C_n^i \Bigr\} \nonumber\\
		&= \Bigl\{ A\setminus\c_n^s : A\in\P(\C_n^i) \Bigr\}.
	\end{align}
		So that, by Proposition~\ref{lemma:PC} again, $\P(\C_n)$ is a partition of $\c_n$ finer than $\P(\C_n^s)\cup\P(\C_n^{i\setminus s})$. Hence, with 
		\begin{equation*}
			\tilde g_n =  \mathop{\sum_{A\in\P(\C_n^s)\cup\P(\C_n^{i\setminus s})}}_{\Q(A)>0} \E[Y | \X\in A]\ind{A},
		\end{equation*}
		we have $\tilde g_n\in \Gset_n$ and 
		\begin{align*}
			\inf_{g\in\Gset_n} \sqrt{\E \left[ \left(g(\X) - g^*(\X) \right)^2 \right]} &\leq \sqrt{\E \left[ \left(\tilde g_n(\X) - g^*(\X) \right)^2 \right]} \\ 
			&\leq \sqrt{\E \left[ \left(\tilde g_n(\X) - g^*(\X)\ind{\c_n}(\X) \right)^2 \right]}\\
			&\quad + \underbrace{\sqrt{\E \left[ g^*(\X)^2\ind{\c_n^c}(\X) \right]}}_{\leq L\sqrt{\Q(\c_n^c)}}.
		\end{align*}	
		Thus, to prove~\eqref{proof:approx}, it suffices to show that $\W_n = o_\Pr(1)$ where
		\begin{equation*}
		\W_n := \E\left[\left(\tilde g_n(\X)-g^*(\X)\ind{\c_n}(\X)\right)^2\right].
		\end{equation*}
			
		We will repeatedly make use of the following equalities: 
		\begin{equation}\label{eq:expectYg}
			\begin{cases}
				\E[ g^*(\X)\ind{\c_n}(\X) \mid \X\in\r ] = \E[ g^*(\X) \mid \X\in\r ] = \E[ Y \mid \X\in\r ]\,, \\ 
				\V(g^*(\X)\ind{\c_n}(\X) \mid \X\in\r ) = \V(g^*(\X) \mid \X\in\r)\,,\qquad 			\forall \r\subseteq\c_n 		
			\end{cases}	
		\end{equation}
		and
		\begin{equation}\label{eq:varianceYg}
 \V(Y | \X \in \r ) = \V(g^*(\X) | \X\in\r) + \sigma^2\,,\qquad		\forall \r\subseteq\Rset^d. 	
		\end{equation}
	
		Let us first remark that
		\begin{align}
			\W_n &= \E \left[ \left( \sum_{A\in\P(\C_n^s)\cup\P(\C_n^{i\setminus s})} \E\left[ Y \mid \X \in A \right] \ind{A}(\X) - g^*(\X)\ind{\c_n}(\X) \right)^2 \right] \nonumber\\
				& = \sum_{A' \in \P(\C_n^s)\cup\P(\C_n^{i\setminus s})\cup\{\c_n^c\}} \E\left[ \left( \sum_{A \in \P(\C_n^s)\cup\P(\C_n^{i\setminus s})} \E\left[ g^*(\X)\ind{\c_n}(\X) \mid \X \in A \right] \ind{A}(\X) \right.\right.\nonumber\\&\quad\left.\left.- g^*(\X)\ind{\c_n}(\X) \right)^2 \ind{A'}(\X)\right] \nonumber\\
				& = \sum_{A' \in \P(\C_n^s)\cup\P(\C_n^{i\setminus s})\cup\{\c_n^c\}} \E\left[ \left(\E\left[ g^*(\X)\ind{\c_n}(\X) \mid \X \in A' \right] - g^*(\X)\ind{\c_n}(\X) \right)^2 \ind{A'}(\X)\right] \nonumber\\
				& = \sum_{A' \in \P(\C_n^s)\cup\P(\C_n^{i\setminus s})\cup\{\c_n^c\}} \E\left[ \left. \left(\E\left[ g^*(\X)\ind{\c_n}(\X) \mid \X \in A' \right] - g^*(\X)\ind{\c_n}(\X) \right)^2 \right| \X \in A' \right]\nonumber\\
				& \quad \times\mathbb P \left(\X\in A'\right)\label{eq:wn}
		\end{align}
		which shows that $\W_n$ is a within-group variance for the variable $g^*(\X)\ind{\c_n}(\X)$ and the groups $\P(\C_n^s)\cup\P(\C_n^{i\setminus s})\cup\{\c_n^c\}$.
		
		According to the decomposition  
		\begin{multline*}
			\W_n = \underbrace{\sum_{A \in \P(\C_n^s) } \E\left[ \left. \left(\E\left[ Y \mid \X \in A \right] - g^*(\X)\ind{\c_n}(\X) \right)^2 \right| \X \in A \right] \mathbb P \left(\X\in A\right)}_{\W_n^s} \\
			+ \underbrace{\sum_{A \in \P(\C_n^{i\setminus s}) } \E\left[ \left. \left(\E\left[ Y \mid \X \in A \right] - g^*(\X)\ind{\c_n}(\X) \right)^2 \right| \X \in A \right] \mathbb P \left(\X\in A\right)}_{\W_n^{ i\setminus s}} \\
			+ \underbrace{\E\left[ \left. \left(\E\left[ g^*(\X)\ind{\c_n}(\X) \mid \X \in \c_n^c \right] - g^*(\X)\ind{\c_n}(\X) \right)^2 \right| \X \in \c_n^c \right] \mathbb P \left(\X\in \c_n^c\right)}_{0}, 
		\end{multline*}
		it is sufficient to prove that $\W_n^s\xrightarrow[n\to\infty]{\Pr}0$ and $\W_n^{ i\setminus s}\xrightarrow[n\to\infty]{\Pr}0$.
	
		To deal with $\W_n^s$, we start from the decomposition of the total variance into the within-group and the between-group variances:
		\begin{align}\label{eq:var_decomposition}
			\W_n^s &= \E[(g^*(\X)\ind{\c_n}(\X) - \E[g^*(\X)\ind{\c_n}(\X)])^2\ind{\c_n^s}(\X)] - \B_n^s\nonumber\\
			&\leq \V( g^*(\X)\ind{\c_n}(\X) ) - \B_n^s
		\end{align}
		where
		\begin{equation*}
			\B_n^s := \sum_{A \in \P(\C_n^s)} \left( \E \left[ g^*(\X)\ind{\c_n}(\X) \mid \X \in A \right] - \E\left[ g^*(\X)\ind{\c_n}(\X) \right] \right)^2\mathbb P \left(\X \in A\right).
		\end{equation*}
		The between group variance $\B_n^s$ will be lower estimated by a function of $\W_n^s$ from which an upper bound of $\W_n^s$ will follow. The key point of this lower bound is to use the definition of $\C_n^s$ in \ref{H:significance}. To make the terms controlled by this significant condition appear, we lower bound $\B_n^s$ by a sum over the elements of $\C_n^s$ instead of elements of $\P(\C_n^s)$ and the expectations is replaced by empirical expectations. After applying the inequality provided by the significant condition we'll have to do the reverse operation to make $\W_n^s$ appear again. 
		
		Let us first lower bound $\B_n^s$ by a sum over the elements of $\C_n^s$ instead of elements of $\P(\C_n^s)$ and replace the expectations by empirical expectations: 
		\begin{align}
			\B_n^s 	&= \sum_{\r \in \C_n^s} \sum_{A \in  \P_{\C_n^s}(\r) } \frac{1}{\# \varphi_{\C_n^s}(A) } \left(\E \left[ Y \mid \X \in A \right] - \E\left[ g^*(\X)\ind{\c_n(\X)} \right] \right)^2 \mathbb P \left(\X \in A\right) \nonumber\\
					&\geq \sum_{\r \in \C_n^s} \frac{1}{\mnsr} \sum_{A \in  \P_{\C_n^s}(\r) } \left(\E \left[ Y \mid \X \in A \right] - \E\left[ g^*(\X)\ind{\c_n(\X)} \right] \right)^2 \mathbb P \left(\X \in A\right) \nonumber\\
					&\ge \frac{1}{\mns} \sum_{\r \in \C_n^s}\sum_{A \in \P_{\C_n^s}(\r)} \left(\E \left[ Y \mid \X \in A \right] - \E\left[ g^*(\X)\ind{\c_n(\X)} \right] \right)^2\mathbb P \left(\X \in A\mid \X\in \r\right) \mathbb P \left( \X\in \r\right) \nonumber\\
					&\ge \frac{1}{\mns} \sum_{\r \in \C_n^s} \left(\sum_{A \in \P_{\C_n^s}(\r) } \E \left[ Y \mid \X \in A \right]\mathbb P \left(\X \in A\mid \X\in \r\right) - \E\left[ g^*(\X)\ind{\c_n(\X)} \right] \right)^2 \mathbb P \left( \X\in \r\right) \nonumber\\
					&= \frac{1}{\mns} \sum_{\r \in \C_n^s} \Bigl( \E \left[ Y \mid \X \in \r \right] - \E \left[ g^*(\X)\ind{\c_n(\X)} \right] \Bigr)^2 \;\Pr \left( \X\in \r\right) \nonumber \\
					&\ge \frac1\mns \sum_{\r\in\C_n^s}\left(V_{n,\r\times\Rset}^2\ - \Delta_n \right) \Pr(\X\in\r) \label{minoBnUn}
		\end{align}
		where we applied Jensen's inequality for the third to last inequality and where 
		\begin{equation*}
			\Delta_n := \sup_{A\in\Br_n}\{ V_{n,A}^2-U_{n,A}^2 \}	
		\end{equation*}
		with  for any $A\in\Br$,
		\begin{gather*}
			U_{n,A}:=\E \left[ Y \mid (\X,Y) \in A \right] - \E \left[ g^*(\X)\ind{\c_n}(\X) \right] \\
			V_{n,A}:=\E_n \left[ Y \mid (\X,Y) \in A \right] - \E_n \left[ Y \right]
		\end{gather*}
		and $\Br_n = \{A \in \Br \text{ s.t. } \Q_n(A) > n^{-\alpha} \}$.
		
		Continuing~\eqref{minoBnUn} with the definition of $\C_n^s$ in \ref{H:significance} in mind,
		\begin{align*}
			\B_n^s 	&\ge \frac1\mns \sum_{\r\in\C_n^s} \left(\beta_n^{-2}(\V_n(Y|\X\in\r)-\sigma_n^2\right) - \Delta_n\big) \; \Pr(\X\in\r) \\
					&\ge \frac1\mns \sum_{\r\in\C_n^s} \left(\beta_n^{-2}(\V_n(Y|\X\in\r)-\sigma^2\right) - \beta_n^{-2} |\sigma_n^2-\sigma^2| - \Delta_n\big) \; \Pr(\X\in\r).
		\end{align*}
		
		Since $\W_n^s = \sum_{A \in \P_{\C_n}(\C_n^s) } \V(g^*(\X)\ind{\c_n}(\X) | \X\in A)  \mathbb P \left(\X\in A\right)$, this last term can be lower bounded by $\W_n^s$ if the empirical variances are replaced by variances and the sum over $\C_n^s$ by a sum over $\P(\C_n^s)$. Let us then define 
		\begin{equation*}
			\Delta'_n:=\sup_{A\in\Br_n}\left\{\big|\V(Y | (\X,Y)\in A)-\V_n(Y | (\X,Y)\in A)\big|\right\}
		\end{equation*}
		and write 
		\begin{align*}
			\mathbb B_n^s 	&\ge \frac1\mns \sum_{\r\in\C_n^s} \left(\beta_n^{-2}(\V(Y \mid \X\in\r)-\sigma^2-\Delta'_n) - \beta_n^{-2} |\sigma_n^2-\sigma^2| - \Delta_n\right) \; \Pr(\X\in\r) \\
						&= \frac1\mns \sum_{\r\in\C_n^s} \left(\beta_n^{-2}(\V(g^*(\X) \ind{\c_n}(\X) \mid \X\in\r) - \Delta'_n) - \beta_n^{-2} |\sigma_n^2-\sigma^2| - \Delta_n\right) \; \Pr(\X\in\r) \tag*{by \eqref{eq:expectYg} and \eqref{eq:varianceYg}} \\
						&= \frac{\beta_n^{-2}}\mns \times
						\sum_{\r\in\C_n^s} \left(\E\left[ \big(\E\left[ Y \mid \X \in \r\right] - g^*(\X)\ind{\c_n}(\X) \big)^2 \mid \X \in \r \right]\right.\\
						&\quad\left.-(\Delta'_n + |\sigma_n^2-\sigma^2| + \beta_n^{2}\Delta_n)\right)\Pr(\X\in\r) \\
						&\geq \frac{\beta_n^{-2}}\mns \times \sum_{\r \in \C_n^s} \sum_{A \in \P_{\C_n^s}(\r) }\left( \E\left[ \big(\E\left[ Y \mid \X \in \r\right] - g^*(\X)\ind{\c_n}(\X) \big)^2 \mid \X \in A \right] \right) \mathbb \Pr(\X \in A) \\
						& \quad - (\beta_n^{-2}\Delta'_n + \beta_n^{-2}|\sigma_n^2-\sigma^2| + \Delta_n) \tag*{\text{ since } $\sum_{\r\in\C_n^s}\Pr(\X\in\r) \le M(\C_n^s)$} \\
						&\geq \frac{\beta_n^{-2}}\mns \times \sum_{\r \in \C_n^s} \sum_{A \in \P_{\C_n^s}(\r) }\left( \E\left[ \big(\E\left[ Y \mid \X \in A \right] - g^*(\X)\ind{\c_n}(\X) \big)^2 \mid \X \in A \right] \right) \mathbb \Pr(\X \in A) \\
						& \quad - (\beta_n^{-2}\Delta'_n + \beta_n^{-2}|\sigma_n^2-\sigma^2| + \Delta_n) \\
						&\geq \beta_n^{-2}\frac{m(\C_n^s)}\mns \times \sum_{A \in  \P(\C_n^s) }\left( \E\left[ \big(\E\left[Y \mid \X \in A\right] - g^*(\X)\ind{\c_n}(\X) \big)^2 \mid \X \in A \right] \right) \mathbb P(\X \in A) \nonumber\\
						& \quad - (\beta_n^{-2}\Delta'_n + \beta_n^{-2}|\sigma_n^2-\sigma^2| + \Delta_n) \nonumber\\
						&= \beta_n^{-2} \frac{m(\C_n^s)}{\mns}\W_n^s - (\beta_n^{-2}\Delta'_n +  \beta_n^{-2}|\sigma_n^2-\sigma^2| + \Delta_n).
		\end{align*} 
		
		Together with~\eqref{eq:var_decomposition}, this yields
		\begin{equation*}
			\W_n^s \leq \frac{\V(g^*(\X)\ind{\c_n}(\X)) + \beta_n^{-2}\Delta'_n + \beta_n^{-2}|\sigma_n^2-\sigma^2| + \Delta_n}{1+\beta_n^{-2} \frac{m(\C_n^s)}{\mns} } \cdot
		\end{equation*}
		
		Under \ref{H:subsetcoverage} and \ref{H:Donsker}, Corollary~\ref{prop:expectation} applies and we obtain
		\begin{align*}
		\Delta_n 	&= \sup_{A\in\Br_n}\{(V_{n,A}-U_{n,A})(V_{n,A}+U_{n,A})\} \\
				&\leq \sup_{A\in\Br_n}\Bigl\{\bigl| \E_n[Y \mid (\X,Y)\in A] - \E[Y \mid (\X,Y)\in A] \bigr|\\
				&\quad + \bigl| \E[g^*(\X)\ind{\c_n(\X)}] - \E_n[Y] \bigr|\Bigr\} \times 4L\\
				&\leq 4L\times\bigl(\sup_{A\in \Br_n} \bigl| \E_n[Y|(\X,Y)\in A] - \E[Y|(\X,Y)\in A] \bigr| \\&\quad+ \bigl| \E[g^*(\X)\ind{\c_n}(\X)] - \E[g^*(\X)] \bigr| + \bigl| \underbrace{\E[g^*(\X)]}_{\E[Y]} - \E_n[Y] \bigr| \bigr)\\
				&\leq 4L\times\bigl(\Ops(n^{\alpha-1/2}) + L\Q(\c_n^c) + O_\Pr(n^{-1/2}) \bigr) \\
				&= \Ops(n^{\alpha-1/2}) 
		\end{align*}
		since $\Q(\c_n^c) = \Ops(n^{\alpha-1/2})$. Corollary~\ref{prop:expectation} also yields
		\begin{equation*}
			\Delta'_n = \Ops(n^{\alpha-1/2}).
		\end{equation*}
		Since it is moreover assumed that $|\sigma_n^2-\sigma^2| = O_\Pr(n^{\alpha-\frac 12})$, \eqref{eq:redundancy_significant} of Condition~\ref{H:redundancy} and Remark \ref{rem:P} lead to
		\begin{equation*}
			\W_n^s \xrightarrow[n\to\infty]{\Pr} 0.
		\end{equation*}
	
		To deal with $\W_n^{i\setminus s}$, remember \eqref{eq:PCni-s} and write
		\begin{align*}
			\W_n^{ i\setminus s} &= \sum_{A \in \P(\C_n^{i\setminus s})} \E\left[ \left(\E\left[ Y \mid \X \in A \right] - g^*(\X)\ind{\c_n}(\X) \right)^2 \ind{A}(\X) \right] \\		
							&= \sum_{A \in \P(\C_n^i)} \E\left[ \left(\E\left[ Y \mid \X \in A\setminus\c_n^s \right] - g^*(\X)\ind{\c_n}(\X) \right)^2 \ind{A\setminus\c_n^s}(\X) \right] \\							&\leq \sum_{A \in \P(\C_n^i)} \E\left[ \left(\E\left[ Y \mid \X \in A \right] - g^*(\X)\ind{\c_n}(\X) \right)^2 \ind{A\setminus\c_n^s}(\X) \right] \\	
							&\leq \sum_{A \in \P(\C_n^i)} \E\left[ \left(\E\left[ Y \mid \X \in A \right] - g^*(\X)\ind{\c_n}(\X) \right)^2 \ind{A}(\X) \right] \\	
							&\leq \frac1{m(\C_n^i)}\sum_{\r \in \C_n^i} \sum_{A \in \P_{\C_n^i}(\r)} \E\left[ \left(\E\left[ Y \mid \X \in A \right] - g^*(\X)\ind{\c_n}(\X) \right)^2 \ind{A}(\X) \right] \\	
							&\leq \frac1{m(\C_n^i)}\sum_{\r \in \C_n^i} \sum_{A \in \P_{\C_n^i}(\r)} \E\left[ \left(\E\left[ Y \mid \X \in \r \right] - g^*(\X)\ind{\c_n}(\X) \right)^2 \ind{A}(\X) \right] \\	
							&\leq \frac1{m(\C_n^i)}\sum_{\r \in \C_n^i} \E\left[ \left(\E\left[ Y \mid \X \in \r \right] - g^*(\X)\ind{\c_n}(\X) \right)^2 \ind{\r}(\X) \right] \\	
							&= \frac1{m(\C_n^i)}\sum_{\r \in \C_n^i} \E\left[ \left. \left(\E\left[ Y \mid \X \in \r \right] - g^*(\X)\ind{\c_n}(\X) \right)^2 \right| \X \in \r \right] \mathbb P \left(\X\in \r\right) \\
							&= \frac{1}{m(\C_n^i)}\sum_{\r \in \C_n^i} \left( \V(Y | \X \in \r ) - \sigma^2\right) \mathbb P \left(\X\in \r\right) \tag*{by \eqref{eq:expectYg} and \eqref{eq:varianceYg}} \\
							&\leq \frac{1}{m(\C_n^i)}\sum_{\r \in \C_n^i} \left( \V_n(Y | \X \in \r ) - \sigma_n^2 + |\sigma_n^2-\sigma^2| + \Delta'_n\right) \mathbb P \left(\X\in \r\right) \\
							&\leq \frac{1}{m(\C_n^i)}\sum_{\r \in \C_n^i} \left(\varepsilon^2_n + |\sigma_n^2-\sigma^2| + \Delta'_n\right)\mathbb P \left(\X\in \r\right) \tag*{under \ref{H:significance}} \\
		&\leq \frac{M(\C_n^i)}{m(\C_n^i)} \left(\varepsilon_n^2 + |\sigma_n^2-\sigma^2| + \Delta'_n\right) \xrightarrow[n\rightarrow\infty]{\Pr} 0 \tag*{under \eqref{eq:redundancy_insignificant} of Condition~\ref{H:redundancy} and with Remark~\ref{rem:P}.}
		\end{align*}

\subsection{Estimation error}

		The proof of \eqref{proof:estim} is inspired by \cite[Theorem 13.1]{DistributionFree} and its proof. It is not needed here to truncate the functions in $\Gset_n$ nor $Y$ since they are all bounded by assumption. Thus the assumptions \ref{H:partitions} do not need to involve the truncation constant.
			
		Recall that $\Gset_n$ is the set of piecewise constant functions with values in $[-L,L]$ on the elements of the partition $\P(\C_n)\cup \{\c_n^c\}$ such that $\forall g\in\Gset_n, \forall \x\in\c_n^c, g(\x) = 0$. Then $\Gset_n$ is a subset of
		\begin{equation*}
			\Gset_c \circ \Pi_n := \left\{g: \Rset^d \to \Rset : g = \sum_{A \in \P} f_A \ind{A}, \P \in \Pi_n, f_A \in \Gset_c \right\},
		\end{equation*}
		where $\Pi_n$ is a refinement of $\{\P(\C_n(\d_n)) : \d_n\in\S^n\}$ (see Defintion \ref{def:refiement}) and $\Gset_c$ is the set of constant functions $\Rset^d \to [-L,L]$. Then 
		\begin{multline*}
			\sup \limits_{g \in \Gset_n } \biggl| \frac 1n \sum_{i=1}^n (g(\X_i) - Y_i)^2 - \E\bigl[ \bigl(g(\X) - Y)^2 \bigr] \biggr| \\ \leq \sup \limits_{g \in \Gset_c \circ \Pi_n } \biggl|  \frac 1n \sum_{i=1}^n (g(\X_i) - Y_i)^2 - \E\bigl[ \bigl(g(\X) - Y)^2 \bigr] \biggr|,
		\end{multline*}
			
		According to \cite[Theorem 9.1 and Problem 10.4]{DistributionFree} we have,
		\begin{multline}\label{estim_bound}
			\Pr \left\{ \sup_{g \in \Gset_c \circ \Pi_n } \biggl|  \frac 1n \sum_{i=1}^n (g(\X_i) - Y_i)^2 - \E\bigl[ \bigl(g(\X) - Y)^2 \bigr] \biggr| > \varepsilon \right\} \\
			\leq 8 \E\left[\mathcal{N}_1\left( \frac{\varepsilon}{32L}, \Gset_c \circ \Pi_n , \X_1^n\right)\right]\exp\left\{ \frac{-n \varepsilon^2}{128.(4L^2)^2}\right\},
		\end{multline}
		where $\X_1^n = \{\X_1, \dots, \X_n \}$. 
		
		Here $\mathcal{N}_1 \left(\varepsilon, \Gset_c \circ \Pi_n , \X_1^n \right)$ is the random variable corresponding to the minimal number $N \in \Nset$ such that there exist functions $g_1, \dots, g_N: \Rset^d \to [-L, L] $ with the property that for every $g \in \Gset_c \circ \Pi_n $ there is a $j \in \{1,...,N\} $ such that
		
		\begin{equation*}
		\frac{1}{n} \sum \limits_{i=1}^n \left| g(\X_i) - g_j(\X_i)\right| \leq \varepsilon.
		\end{equation*}
		
		This number is called the $\varepsilon$-covering number of $\Gset_c \circ \Pi_n $. It can be interpreted as the complexity of the class. 
		Then using \cite[Lemma 13.1]{DistributionFree} (it can be checked to hold in the current situation, in particular for a family of finite collections of disjoint sets instead of a family of partitions of $\Rset^d$) we have
		\begin{multline*}
		\mathcal{N}_1 \left(\frac{\varepsilon}{32L}, \Gset_c \circ \Pi_n, \X_1^n \right) \\
		\leq \widetilde\Delta_n(\Pi_n) \left\{ \sup_{z_1, \dots, z_m \in \{\X_1, \dots, \X_n \}, m \leq n} \mathcal{N}_1\left(\frac{\varepsilon}{32L}, \Gset_c, z_1^m\right) \right\}^{\widetilde\M(\Pi_n)},
		\end{multline*}
					
		According to \cite[Lemma 9.2]{DistributionFree} for any set of functions $\Gset$ and any sample $z_1^m$ we have
		\begin{equation*}
		\mathcal{N}_1 \left(\frac{\varepsilon}{32L}, \Gset, z_1^m \right) \leq \mathcal{M}_1 \left(\frac{\varepsilon}{32L}, \Gset, z_1^m \right),
		\end{equation*}
		where $\mathcal{M}_1\left(\varepsilon, \Gset, z_1^m\right)$ is the maximal $N \in \Nset$ such that there exist functions $g_1, \dots, g_N \in \Gset$ with
		\begin{equation*}
		\frac{1}{n} \sum \limits_{i=1}^m \left| g_j(z_i) - g_k(z_i)\right| \geq \varepsilon,
		\end{equation*}
		for all $1 \leq j < k \leq N$. It is called $L_1$ $\varepsilon$-packing of $\Gset$ on $z_1^m$. See \cite[Definition 9.4 (c)]{DistributionFree}.
		
		Now, from the definition of $\Gset_c$,
		\begin{equation*}
		\sup_{z_1, \dots, z_m \in \{\X_1, \dots, \X_n \}, m \leq n} \mathcal{M}_1\left(\varepsilon, \Gset_c, z_1^m \right) \leq \biggl\lceil \frac{2L}{\varepsilon} \biggr\rceil +1.
		\end{equation*}
		Finally, 
		\begin{equation}\label{eq:N1_bound}
		\sup_{z_1, \dots, z_m \in \{\X_1, \dots, \X_n \}, m \leq n} \mathcal{N}_1 \left(\frac{\varepsilon}{32L}, \Gset_c \circ \Pi_n, z_1^m \right) \leq \widetilde\Delta(\Pi_n) \biggl(\biggl\lceil \frac{64L^2}{\varepsilon} \biggr\rceil + 1\biggr)^{\widetilde\M(\Pi_n)}.
		\end{equation}
		According to \eqref{estim_bound} and \eqref{eq:N1_bound} we have:
		\begin{multline*}
		\Pr \left\{ \sup \limits_{g \in \Gset_n}\left| \frac{1}{n} \sum_{i=1}^{n} |g(\X_i) - Y_{i} |^2 - \E \left[ |g(\X) - Y|^2\right]\right| > \varepsilon \right\} \\
		\leq 8 \widetilde\Delta_n(\Pi_n) \biggl(\biggl\lceil \frac{64L^2}{\varepsilon} \biggr\rceil + 1\biggr)^{\widetilde\M(\Pi_n)} \exp\left(- \frac{n \varepsilon^2}{128. (4L^2)^2}\right)
		\end{multline*}
		and since
		\begin{align*}
		& 8\tilde \Delta_n(\Pi_n) \biggl(\biggl\lceil \frac{64L^2}{\varepsilon} \biggr\rceil + 1\biggr)^{\widetilde\M(\Pi_n)} \exp\left(- \frac{n \varepsilon^2}{2048L^4}\right) \\
		& = 8 \exp\left( - \frac{n}{L^4} \left( \frac{\varepsilon^2}{2048} - \frac{\log{\widetilde\Delta_n(\Pi_n)} L^4}{n} - \frac{ \widetilde\M(\Pi_n) L^4 \log\left( \biggl\lceil \frac{64L^2}{\varepsilon} \biggr\rceil + 1 \right)}{n} \right) \right),
		\end{align*}
		this concludes the proof of \eqref{proof:estim} and of Theorem~\ref{th:consistance}.

\section{Illustrations}\label{sec:Application}
In this section we propose a simple algorithm to generate data-dependent coverings using either Random Forests (RF, \cite{Breiman01}) or Gradient Boosted trees (GB, \cite{Friedman01}) or Stochastic Gradient Boosting trees (SGB, \cite{Friedman02}) as rule generator. The interest is twofold; First, it exhibits examples of data-dependent quasi-coverings that are likely to be suitable as in Definition \ref{def:suitable_covering}. Second, we observe that the interpretability indices  are small when applying our algorithm predicting real data selecting suitable sets of rules. Note that all rules considered here are based on intervals of $\Rset$ as in Remark \ref{rem:hyperrect} and then their length is automatically smaller than or equal to $d$.

\subsection{Covering Algorithm}\label{subsec:Algo}
The proposed algorithm generates an estimator based on a data-dependent quasi-covering:
\begin{enumerate}
	\item Generate trees with a given method among RF, GB or SGB with a maximal tree size, $tree\_size$, and a maximal number of generated rules (all the nodes and leaves of the trees), $max\_rules$.	
	\item For a chosen $\alpha \in (0,1/2)$, set $\beta_n = n^{\alpha/2 - 1/4}$ and $\varepsilon_n = \beta_n s_n$, where $s_n$ is the empirical standard deviation of $Y$. Keep all rules which length is  less than or equal to $l\_max \in \{1, \dots, d\}$ and which fulfill \ref{H:subsetcoverage}.
	\item Split this set of rules into two sets: The set of significant rules $S_n$ (\ie rules $\r$ that fulfill 	$ \beta_n\big| \E_n[Y | \X \in \r] - \E_n[Y]\big| \geq \sqrt{(\V_n(Y | \X \in \r ) - \sigma_n^2)_+}$) and the set of insignificant rules $I_n$ (\ie rules $\r$ that are not in $S_n$ and that fulfill $\varepsilon_n \geq \sqrt{ ( \V_n(Y | \X \in \r ) - \sigma_n^2)_+}$). 
	\item Select a minimum set of rules $\C_n$ using Algorithm \ref{pseudocode} given in the appendix. If at any step the set of selected rules forms a covering, the selection process is stopped. A rule is added to the currently selected set of rules if and only if it has at least a rate $1-\gamma\in (0,1)$ of points not covered by the current set of rules\footnote{This step was already described in Remark \ref{rem:redundancy} in order to fulfill the redundancy condition \ref{H:redundancy}.}. The set $S_n$, sorted by decreasing empirical coverage rate, is browsed first. Then, if necessary, the set $I_n$, sorted by increasing empirical variance, is browsed. 
\end{enumerate}
As explained in Definition~\ref{def:suitable_covering} the set $\C_n$ is the union of the significant set $\C_n^s = \C_n\cap S_n$ and the insignificant set $\C_n^i = \C_n\cap I_n$. In practice the set $\C_n^s$ is the most interesting one because it identifies the rules where the conditional mean is prominent. 

The datasets and the code for the illustrations are available on \href{https://github.com/VMargot/CoveringAlgorithm}{GitHub}. The code is written in both the Python and R languages.

\subsection{Artificial data}\label{sec:artif}
We consider here the same model as in \cite{Friedman08}. We generate $n=5000$ observations following the regression setting
\begin{equation*}
Y = g^*(\X) + Z,
\end{equation*}
where $d=100$ (the dimension of $\X$) and
\begin{multline}\label{data}
g^*(\X) = 9 \prod_{j=1}^3 \exp\left(-3\left(1- X_j\right)^2\right) - 0.8\exp\left(-2 \left(X_4 - X_5 \right)\right) \\
+2\sin^2(\pi \cdot X_6) - 2.5\left(X_7 - X_8\right),
\end{multline}
and $Z \sim \mathcal{N}(0,\sigma^2)$. The value of $\sigma>0$ was chosen to produce a two-to-one signal-to-noise ratio. The variables were generated from a uniform distribution on $\{0/10,\ldots,9/10\}$.
It is important to note that only the eight first variables are informative; the $92$ others are just noise. The coefficients that multiply each of the terms in $g^*$ have been chosen to ensure that the variables have approximately the same influence. 

We evaluate the accuracy of the estimators with the mean squared error defined by
$$MSE = \dfrac{\E_{\Q}\left[(Y - g_n(\X))^2\right]}{\V(Y)}.$$

In order to evaluate the error without the noise variance, we also consider the following criterion:
$$MSE^* = \dfrac{\E_{\Q}\left[(g^*(\X) - g_n(\X))\right]}{\V(g^*(\X) )}.$$

We approximate the criteria $MSE$ and $MSE^*$ with $50000$ test observations sampled independently from $\Q$.

\subsubsection{Execution}
We run $M=100$ simulations with $5000$ independent observations. For each simulation we compare the Covering Algorithm with RF as rules generator (Covering) with a classical Random Forest (RF) and with RuleFit \citep{Friedman08} with rules only.

RuleFit is a very accurate rule-based algorithm. First it generates a list of rules by considering all nodes and leaves of a boosted tree ensemble ISLE \citep{Friedman03}. Then rules are used as features in a sparse linear regression model obtained by Lasso \citep{Tibshirani96}.

For RF, we set the number of trees at $100$. For RuleFit and Covering we set the maximal number of generated rules at $4000$ and the maximal length of a rule is fixed at $k_{max} = 3$. And we set $\alpha = 1/2 - 1/100$ and $\gamma = 0.90$ for Covering.
	
\subsubsection{Results}
The $MSE$ and $MSE^*$ and the interpretability index \eqref{eq:inter} for each algorithm across the experiments are summarized in Table~\ref{tab:resume_exp} and plotted in Figure~\ref{fig:MSE}. 

\begin{figure}[h!]
	\centering
	\includegraphics[scale=0.35]{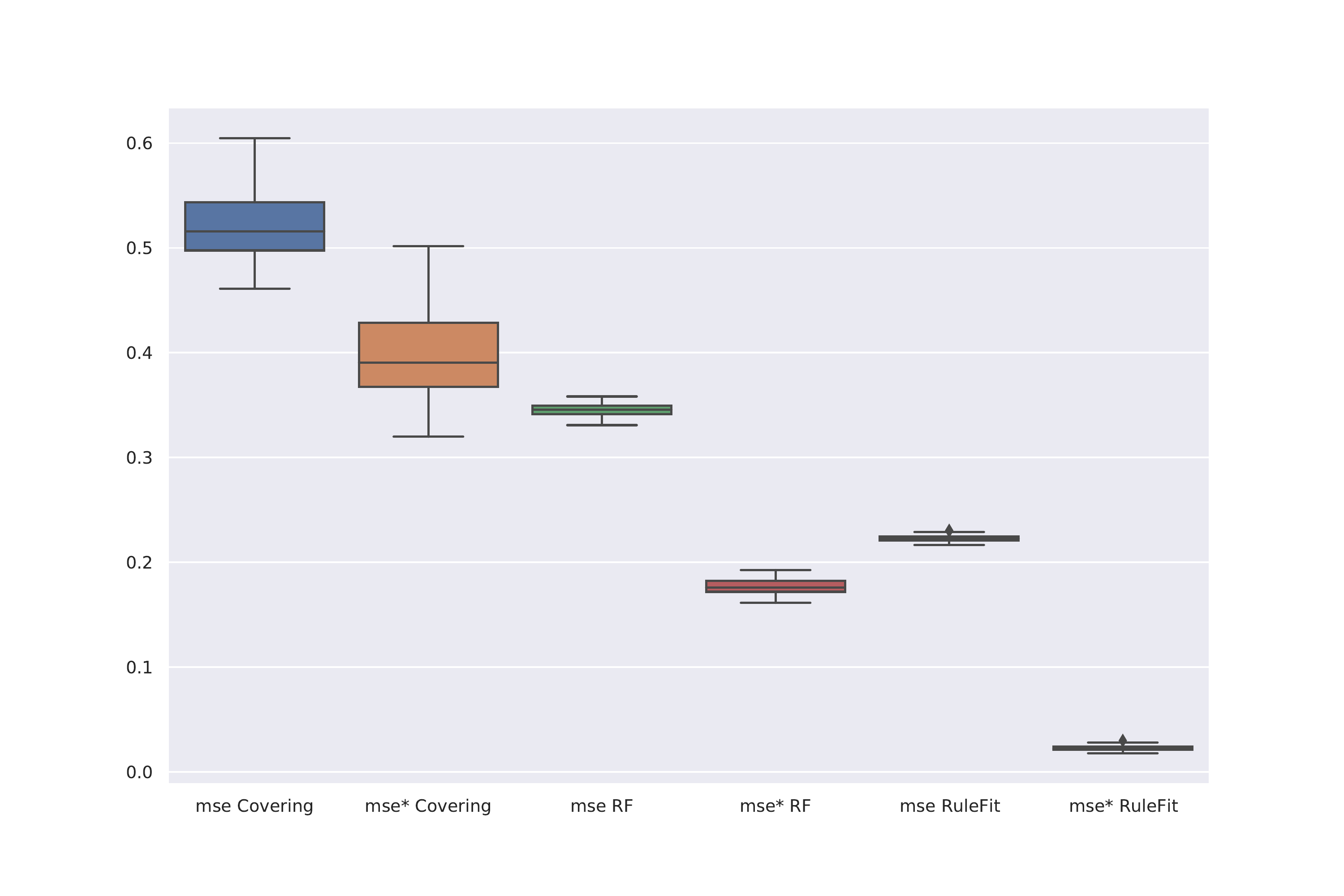}
	\caption{\label{fig:MSE} $MSE$ on $100$ realizations from model~\eqref{data} for Covering Algorithm (Covering), Random Forest (RF) and RuleFit.}
\end{figure}

\begin{table}[h!]
	\centering
	\begin{tabular}{crrrr}
		\hline
		{\makecell{\textbf{Random}\\ \textbf{Forest}}} & Nb rules &  Interpretability  & MSE & MSE* \\
		\hline
		mean  &  630718.78 &  6871475.52 &    0.35 &     0.18 \\
		std   &     429.56 &    28124.49 &    0.01 &     0.01 \\
		min   &  629458.00 &  6815078.00 &    0.33 &     0.16 \\
		25\%   &  630434.00 &  6848260.00 &    0.34 &     0.17 \\
		50\%   &  630701.00 &  6871049.00 &    0.35 &     0.18 \\
		75\%   &  631020.50 &  6890434.00 &    0.35 &     0.18 \\
		max   &  631790.00 &  6947934.00 &    0.36 &     0.19 \\
		\hline
	\end{tabular}
	\begin{tabular}{crrrr}
		\\
		\hline
		{\makecell{\textbf{Covering}\\ \textbf{Algorithm}}} & Nb rules &   Interpretability   &  MSE &  MSE* \\
		\hline
		mean  &        15.89 &      43.16 &          0.52 &           0.40 \\
		std   &         2.53 &       8.65 &          0.03 &           0.04 \\
		min   &        10.00 &      23.00 &          0.46 &           0.32 \\
		25\%   &        14.00 &      36.00 &          0.50 &           0.37 \\
		50\%   &        16.00 &      42.00 &          0.52 &           0.39 \\
		75\%   &        17.00 &      48.00 &          0.54 &           0.43 \\
		max   &        24.00 &      70.00 &          0.60 &           0.50 \\
		\hline
	\end{tabular}
	\begin{tabular}{crrrr}
		\\
		\hline
		{\textbf{RuleFit}} & Nb rules  &  Interpretability & MSE &  MSE* \\
		\hline
		mean  &         360.65 &      1155.63 &         0.22 &          0.02 \\
		std   &          55.30 &       232.09 &         0.00 &          0.00 \\
		min   &         253.00 &       747.00 &         0.22 &          0.02 \\
		25\%   &         314.75 &       965.25 &         0.22 &          0.02 \\
		50\%   &         358.50 &      1143.00 &         0.22 &          0.02 \\
		75\%   &         397.00 &      1297.25 &         0.22 &          0.02 \\
		max   &         507.00 &      1792.00 &         0.23 &          0.03 \\
		\hline
	\end{tabular}
	\caption{\label{tab:resume_exp}Number of rules generated by the algorithm (Nb Rules), interpretability index \eqref{eq:inter} (Interpretability) and mean squared errors $MSE$ and $MSE^*$ for each algorithm.}
\end{table}

\begin{figure}[h!]
	\centering
	\includegraphics[scale=0.30]{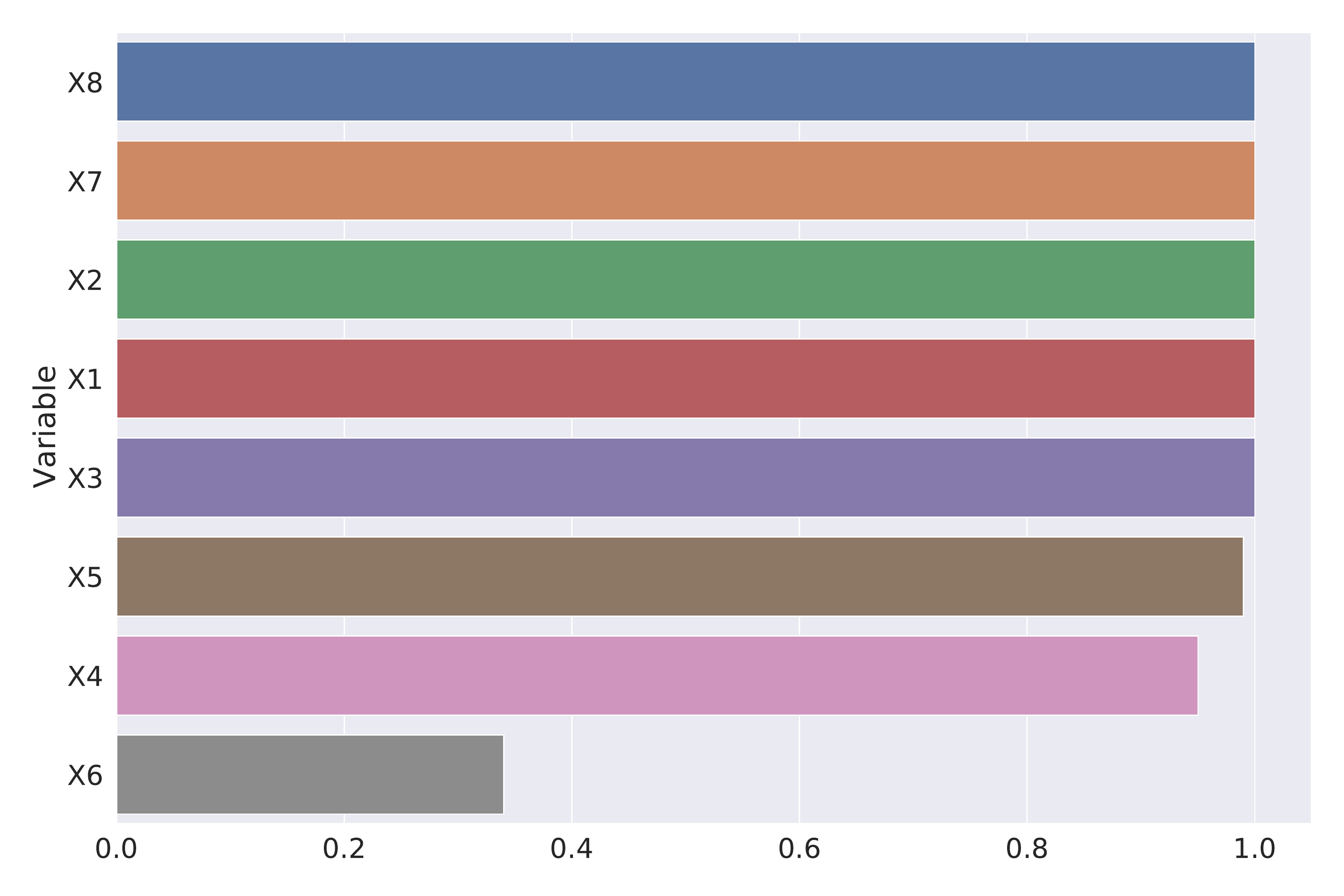}
	\caption{\label{fig:proba_occur}For each informative variable, frequency of occurrence in at least one rule selected by the Covering Algorithm in $100$ independent simulations.}
\end{figure}


\subsubsection{Comments}
Figure~\ref{fig:proba_occur} shows the frequency of occurrence of a variable in at least one rule of the selected set of rules. Note that only informative variables are involved in the selected rules. Moreover, in one thirds of the experiments, all informative variables are identified. It means the support of $g^*$ is well identified in these cases. In the most of the experiences, the variable $X6$ is the only variable that is not involved. One reason could be that RF is a random generator of rules that may not capture the importance of $X6$ at every run. The problem could be solved by considering a deterministic rule generator algorithm. 

Table~\ref{tab:resume_exp} emphasizes that Covering Algorithm generates more interpretable models than RF and RuleFit according to the interpretability index defined in \eqref{eq:inter}. Indeed, for the same constraints on the maximal number of rules and the maximal length of the rules, Covering Algorithm selects far fewer rules than RuleFit. Nevertheless, RuleFit is much more accurate than Covering Algorithm. The accuracy of RuleFit is high even if many noise variables are included in its generated model. Therefore, a posteriori analysis of the importance of the variables and rules of the generated model of RuleFit is crucial, see Section 9.1.2 of \cite{Friedman08} for more information.


\color{black}
\subsection{Real data}\label{sec:real}
For this application, we consider seven public datasets presented in Table~\ref{tab:reg_datasets}. with different dimension and number of observations. For the dataset \emph{Student} we have removed the variables $G1$ and $G2$ that are the first and the second grade respectively because the target attribute $G3$ has a strong correlation with $G2$ and $G1$. In \cite{Cortez08} the authors state that despite the complexity of the regression model, the prediction of $G3$ without $G2$ and $G1$ is much more useful in practice.

	\begin{table}[ !ht]
	\begin{center}
	\begin{tabular}{|l||c|p{7.5cm}|}
		\hline
		Name & $(n \times d)$ & Short description \\
		\hline
		Diabetes & $443 \times 10$ & Prediction of quantitative measure of disease progression one year after baseline \citep{Efron04}.\\[0.2cm]
		Prostate &$97 \times 8 $ & Prediction of the level of prostate-specific antigen based on clinical measures in men who were about to receive a radical prostatectomy \citep{ElementST}.\\[0.2cm]
		Ozone & $330 \times 9 $ & Prediction of atmospheric ozone concentration from daily meteorological measurements \citep{ElementST}. \\[0.2cm]
		Machine & $209 \times 8$ & Prediction of published relative performance \citep{Dua2019}. \\[0.2cm]
		MPG & $398 \times 8$ & Prediction of city-cycle fuel consumption in miles per gallon \citep{Dua2019}. \\[0.2cm]
		Boston & $506 \times 13$ & Prediction of the median price of neighborhoods, \citep{Harrison78}. \\[0.2cm]
		Student & $649 \times 32$ & Prediction of the final grade of students based on attributes collected by reports and questionnaires \citep{Cortez08}.\\[0.2cm]
		\hline
 	\end{tabular}
 	\end{center}
 	\caption{\label{tab:reg_datasets} Brief description of the public regression datasets.}
 	\end{table}

\subsubsection{Execution}
For each dataset we run $20$ executions. For each execution, the data are randomly split into a training set and a test set, with $70\%$ and $30\%$ ratios respectively. As a baseline of accuracy we consider RF \citep{Breiman01}  and as a baseline of interpretability we consider the CART algorithm \citep{CART}. We compare Covering Algorithm with RF as rule generator (CA RF),  GB trees as rule generator (CA GB),   SGB trees as rule generator (CA SGB) with RuleFit \citep{Friedman08} (with rules only), Node harvest \citep{Meinshausen10} (Nh) and SIRUS \citep{Benard20}. These algorithms  are the main existing interpretable rule-based algorithms for regression settings that are based on tree ensembles.

Node harvest uses also a tree ensemble as rule generator. The algorithm considers all nodes and leaves of RF as rules and solves a linear quadratic problem to fit a weight to each rule. So the prediction is a convex combination of rules.

SIRUS (Stable and Interpretable RUle Set) is designed as a stable predictive algorithm. It uses a custom RF algorithm to generate many rules (considering nodes and leaves). Then it selects the rules with a rate of occurrence greater than a tuning parameter $p_0$. To ensure a large number of occurrences on rules, the features are discretized.

The maximal number of leaves of CART is set at $tree\_size = 10$ and the number of tree for RF is set at $nb\_tree=1000$. For CA and RuleFit we set similarly the maximal number of rules generated by the rule generator $max\_rules = 4000$ and the size of a tree $tree\_size= 8$.   For CA, Node harvest and SIRUS we  set identically the maximum rule length $l\_max = 3$. For CA we also set $\alpha = 1/2 - 1/100$ and $\gamma = 0.90$. In these applications the real value of $\sigma^2$ is unknown. We estimate it by $\sigma_n^2$ the minimal variance of the generated rules fulfilling the covering condition \ref{H:subsetcoverage}. We have no guarantee that this estimator is good enough (see Remark \ref{rem:sigmaknown}). For SIRUS, the hyperparameter $p0$ is estimated by $10$-fold cross-validation and the maximal number of rules is set at $25$. The parameters setting is summarized in Table~\ref{tab:algo_params}.

\begin{table}[ !ht]
	\begin{center}
	\begin{tabular}{|l|p{7.7cm}|}
		\hline
		Algorithm & Parameters \\
		\hline
		CART & $tree\_size = 10$.\\
		RF & $nb\_tree=1000$.\\
		RuleFit & $tree\_size=8$, \newline $max\_rules=4000 $. \\ 
		Node harvest (Nh) & $l\_max=3$. \\
		SIRUS & $l\_max=3$, \newline $max\_selected\_rules=25$, \newline $ncv=10$. \\ 
		CA  & $max\_rules=4000$,\newline $tree\_size=8$,\newline $alpha=1/2-1/100$, \newline$gamma=0.90$, \newline $l\_max = 3$. \\[0.2cm]
		\hline
 	\end{tabular}
 	\end{center}
 	\caption{\label{tab:algo_params} Algorithm parameter settings.}
 	\end{table}

\subsubsection{Results}\label{sec:results}
Experimental results are gathered in Table~\ref{tab:res_reg}. For each dataset and algorithm we present an average over the 20 executions of the number of selected rules (Nb rules), the empirical coverage rate on the training set (Cov), the interpretability index \eqref{eq:inter} (Int) and the mean squared error on the test set divided by the empirical variance of the target (MSE). 

\begin{table}[ !ht]
	\begin{center}
 		\begin{tabular}{|l|c|c|c||c|c|c|}
			\hline
			\multirow{2}{*}{Dataset} & \multicolumn{3}{|c||}{Regression Tree (CART)} & \multicolumn{3}{|c|}{Random Forest (RF)} \\
			\cline{2-7}
			 & Nb rules & Int & MSE & Nb rules & Int & MSE \\
			\hline
			Diabetes & 10 & 30.2 & 1.14 & 191190.6 & 1070454.75 & \textbf{0.58} \\
			Prostate & 10 & 25.3 & 1.11 & 42047.95 & 169328.6 & \textbf{0.56}\\
			Ozone & 10 & 32 & 0.5 & 108082.95 & 572067.75 & \textbf{0.28}\\
			Machine & 10 & 24.2 & 0.18 & 73754.25 & 319376.45 & \textbf{0.06}\\
			MPG & 10 & 29.0 & 0.27 & 148954.4 & 623266.0 & \textbf{0.14}\\
			Boston & 10 & 26.3 & 0.29 & 212961.05 & 1270252.25 & \textbf{0.14} \\
			Student & 10 &  37.75 &  1.55 &  199278.65 & 1838107.25 & \textbf{0.74}\\
			\hline
		\end{tabular}
		\vspace*{0.3cm}
		
		\begin{tabular}{|l|c|c|c|c||c|c|c||c|}
			\hline
			\multirow{2}{*}{Dataset}  & \multicolumn{4}{|c||}{CA RF} & \multicolumn{4}{|c|}{CA GB} \\
			\cline{2-9}
			 & Nb rules & Cov & Int & MSE &  Nb rules & Cov & Int & MSE \\
			\hline
			Diabetes & \textbf{7.6} & 0.99 & \textbf{17.95} & 0.70 & 9.1 & 0.99 & 19.7 & 0.74 \\
			Prostate & \textbf{5.8} & 1.0 & \textbf{13.95} & 0.85 & \textbf{6.05} & 1.0 & \textbf{13.35} & 0.86 \\
			Ozone & \textbf{3.3} &  0.99 &  \textbf{5.85} & 0.42 & 3.95 &  0.99 &  6.55 & 0.39\\
			Machine & \textbf{2.85} &  0.99 &  \textbf{5.05} & 0.40 & 3.25 &  0.99 & 6.05 &  0.50\\
			MPG & \textbf{3.1} & 0.99 &  \textbf{4.3} & 0.28 &  \textbf{3.65} & 0.99 & 6.85 & 0.30\\
			Boston & \textbf{4.9} & 0.99 & \textbf{10.9} & 0.45 & 6.8 & 0.99 & 15.15 & 0.45\\
			Student & 18.45 & 0.99 & 51.4 & 0.86 & 32 & 0.98 & 86.75 &  1.01\\
			\hline
		\end{tabular}
		\vspace*{0.3cm}
				
		\begin{tabular}{|l|c|c|c|c||c|c|c|c|}
			\hline
			\multirow{2}{*}{Dataset}& \multicolumn{4}{|c||}{CA SGB}  & \multicolumn{4}{|c|}{RuleFit}\\
			\cline{2-9}
			 & Nb rules & Cov & Int & MSE & Nb rules & Cov & Int & MSE \\
			\hline
			Diabetes & 8.75 & 0.99 & 19.8 & 0.67 & 249.35 & 1.0 & 1168.7 & 0.71 \\
			Prostate & 6.5 & 1.0 & 15.05 & 0.76 & 78.4 & 1.0 & 268.8 & 0.66\\
			Ozone & 4.05 & 0.99 & 6.5 & 0.38 & 187.45 & 1.0 & 851 & \textbf{0.28} \\
			Machine & 3.2 & 0.99 & 6.1 & 0.52 & 99.85 & 1.0 & 278.15 & 0.08 \\
			MPG & 3.75 & 0.99 & 7.25 & 0.29 & 174.7 & 1.0 & 745.15 & \textbf{0.13} \\
			Boston & 6.35 & 0.99 & 13.85 & 0.46 & 264.95 & 1.0 & 1207.65 & \textbf{0.12} \\
			Student & 37.3 & 0.98 & 100.05 & 1.01 & 277.95 & 1.0 & 1280.4 & 0.90 \\
			\hline
		\end{tabular}
		\vspace*{0.3cm}
		
		\begin{tabular}{|l|c|c|c|c||c|c|c|c|}
			\hline
			\multirow{2}{*}{Dataset}  & \multicolumn{4}{|c||}{NodeHarvest} & \multicolumn{4}{|c|}{SIRUS} \\
			\cline{2-9}
			 & Nb rules & Cov & Int & MSE &  Nb rules & Cov & Int & MSE \\
			\hline
			Diabetes & 227.15 & 1.0 & 613.05 & \textbf{0.59} & 21.25 & 0.99 & 32.35 & \textbf{0.57} \\
			Prostate  & 66 & 1.0 & 160.35 & \textbf{0.57} & 23.25 & 0.99 & 33.7 & \textbf{0.59} \\
			Ozone & 181.45 & 1.0 & 472.9 &  \textbf{0.29} & 24.9 &  1.0 & 39.4 & 0.32 \\
			Machine & 67.15 & 1.0 & 159.0 &  0.32 & 13.9 & 1.0 & 21.85 &  0.26\\
			MPG & 100.15 & 1.0 & 257.85 & 0.15 &  25.0 & 1.0 & 45.2 &  0.21\\
			Boston & 139.85 & 1.0 &  345.4 & 0.20 &  25.0 & 0.99 & 38.5 & 0.25\\
			Student & 78.5 & 1.0 & 228.45 &  \textbf{0.76} & \textbf{15.35} & 1.0 & \textbf{26.2} & \textbf{0.77}\\
			\hline
		\end{tabular}
		
	\end{center}
 	\caption{\label{tab:res_reg} Average of the number of rules (NB rules), the empirical coverage rate on the training set (Cov), the interpretability index (Int) and the ratio mean squared error/empirical variance of the target (MSE) over 20 executions of usual interpretable algorithms for various regression public datasets. Best values are in bold, as well as values within $10\%$ of the best for each dataset.}
 	\end{table}

These results emphasize that suitable data-dependent quasi-coverings (see Definition \ref{def:suitable_covering}) are very efficient to generate an interpretable rule-based model, see also Section \ref{sec:ex_inter}. They generate very simple models with an interpretability index \eqref{eq:inter} much lower than the other algorithms. Moreover the sets of selected rules $\C_n$ form in most cases a covering of the training set.  CA achieves a good interpretability-accuracy trade-off with emphasis on interpretability except for the dataset {\it Student} that might be too complex  to be interpretable. SIRUS appears as a good challenger achieving a more balanced trade-off. Finally, these results show that the choice of the rule generator for the CA has an impact on the interpretability index and on the accuracy.

\subsubsection{Examples of interpretation}\label{sec:ex_inter}
Any model generated by a CA can be summarized in a table of rules. We present one set of rules selected by CA RF for the dataset \emph{Ozone} in Table~\ref{tab:rules_ozone}. The measure $\Delta_n$ is the empirical mean deviation ratio of the prediction from the mean of $Y$.

\begin{table}[!ht]
	\centering
		\begin{tabular}{|l||c|c|c|c|c|}
			\hline
			Rule & Conditions & Coverage & Prediction &  Std & $\Delta_n$\\
			\hline
			$R1$ & \makecell{$temp \in [30, 67.5]$} & $0.65$ & $7.99$ & $4.57$ & $-0.2$\\
			\hline
			$R2$ & \makecell{$ibt \in [-15, 191]$} & $0.60$ & $8.02$ & $4.5$ & $-0.2$\\
			\hline
			$R3$ & \makecell{$ibt \in [172, 326]$ \\ $ humidity \in [35.5, 92]$} & $0.41$ & $18.61$ & $7$ & $0.8$\\
			\hline
			$R4$ &  \makecell{$ibt \in [172, 326]$ \\ $ humidity \in [19, 35.5]$} & $0.08$ & $6.11$ & $2.64$ & $-0.4$\\
			\hline
	\end{tabular}
\caption{\label{tab:rules_ozone}Summary of the rules selected by CA RF for the dataset \emph{Ozone}. All rules are significant. The $temp$ variable is the Sandburg Air Force base temperature in degrees Fahrenheit, the  $ibt$ variable is the inversion base temperature at LAX in degrees Fahrenheit and the $humidity$ variable is the humidity in percent at LAX.}
\end{table}


Table~\ref{tab:rules_ozone} is a description of the model. Together with Table~\ref{tab:resum_variables} we are able to translate in natural language the significant rules in Table~\ref{tab:rules_ozone}, according to the definition of interpretability of \cite{Biran17}. 

\begin{table}[!ht]
	\centering
		\begin{tabular}{|l||c|c|c|c|}
			\hline
			{} & Y & $temp$ & $ibt$ & $humidity$\\
			\hline
			mean & 11.78 & 61.75 & 161.16 & 58.13\\
			std & 8.01 & 14.46 & 76.68 & 19.87\\
			min & 1.00 & 25.00 & -25.00 & 19.00\\
			25\% & 5.00 & 51.00 & 107.00 & 47.00\\
			50\% & 10.00 & 62.00 & 167.50 & 64.00\\
			75\% & 17.00 & 72.00 & 214.00 & 73.00\\
			max & 38.00 & 93.00 & 332.00 & 93.00\\
			\hline
		\end{tabular}
\caption{\label{tab:resum_variables}Description of the variables selected by CA RF for the dataset \emph{Ozone}. The $Y$ variable is the daily maximum of the hourly average ozone concentration in Upland.}
\end{table}

\begin{itemize}
\item Rule $R1$ has a length equal to $1$ and suggests that if the $temp$ variable is reasonably low then the daily maximum of the hourly average ozone concentrations in Upland tends to be low.
\item Rule $R2$ has a length equal to $1$ and suggests that if the $ibt$ variable is not too high then the target tends to be low.
\item Rules $R3$ and $R4$ have a length equal to $2$ and suggest that CA has detected an interaction between high $ibt$ and the $humidity$ variable. Indeed, high value of $ibt$ and high $humidity$ indicate a high hourly average ozone concentrations and high value of $ibt$ and low $humidity$ indicate a low hourly average ozone concentrations.
\end{itemize}

\subsubsection{Comments}\label{sec:comments}

In Figure~\ref{fig:mse_machine} and Figure~\ref{fig:mse_ozone} we present boxplots of the MSE over the 20 executions for each algorithm for the \emph{Machine} and \emph{Ozone} datasets. These two datasets has been chosen to show that CA can have both a very variable accuracy (Figure~\ref{fig:mse_machine}) and a stable and a good accuracy (Figure~\ref{fig:mse_ozone}) for two datasets with the same dimension.

\begin{figure}[!ht]
		\includegraphics[scale=0.16]{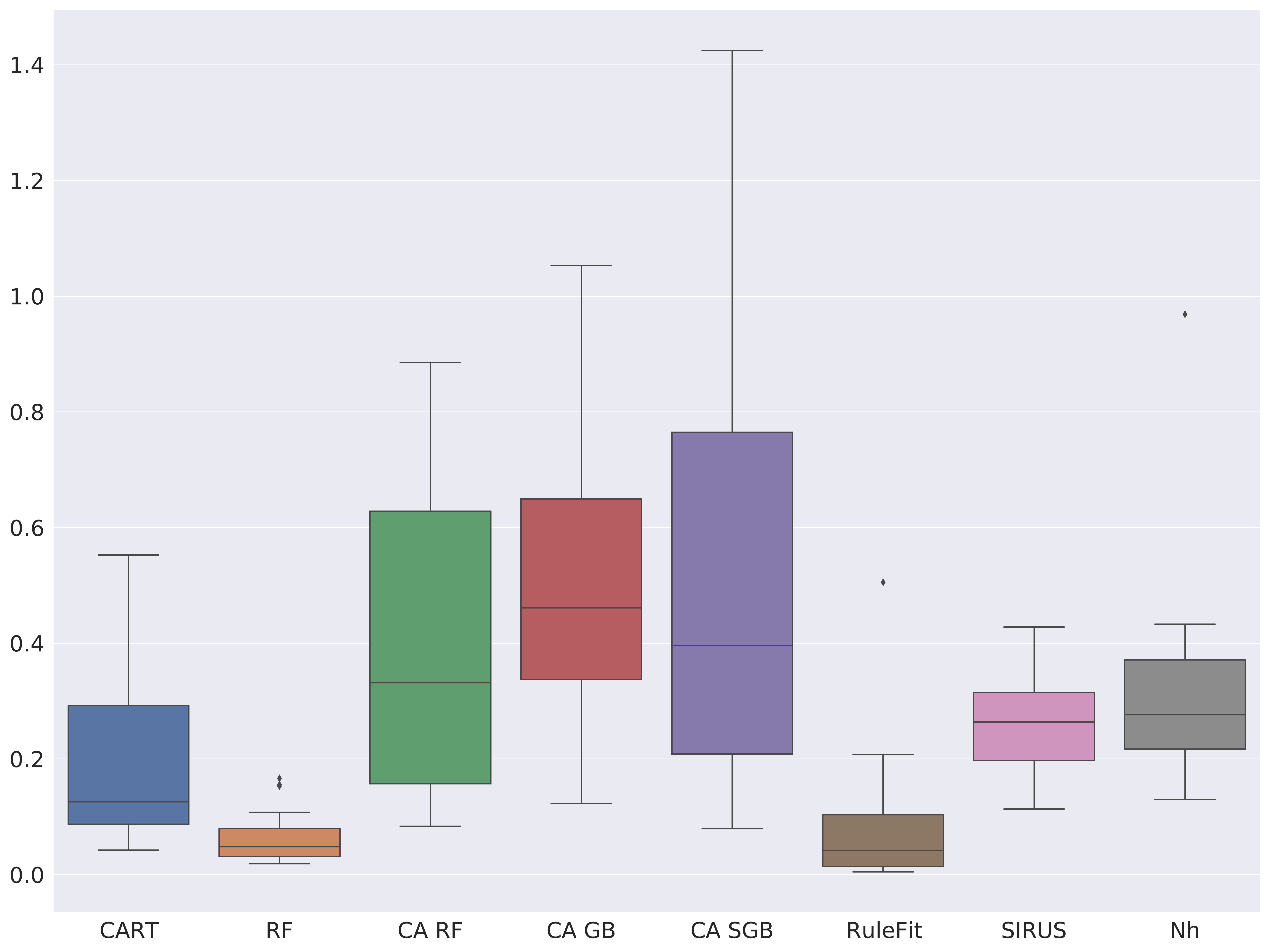}
		\caption{\label{fig:mse_machine} Boxplots of the ratio mean squared error/empirical variance of the target (MSE) over  20  executions of   algorithms for the \emph{Machine} dataset.}
\end{figure}
\begin{figure}[!ht]
		\includegraphics[scale=0.16]{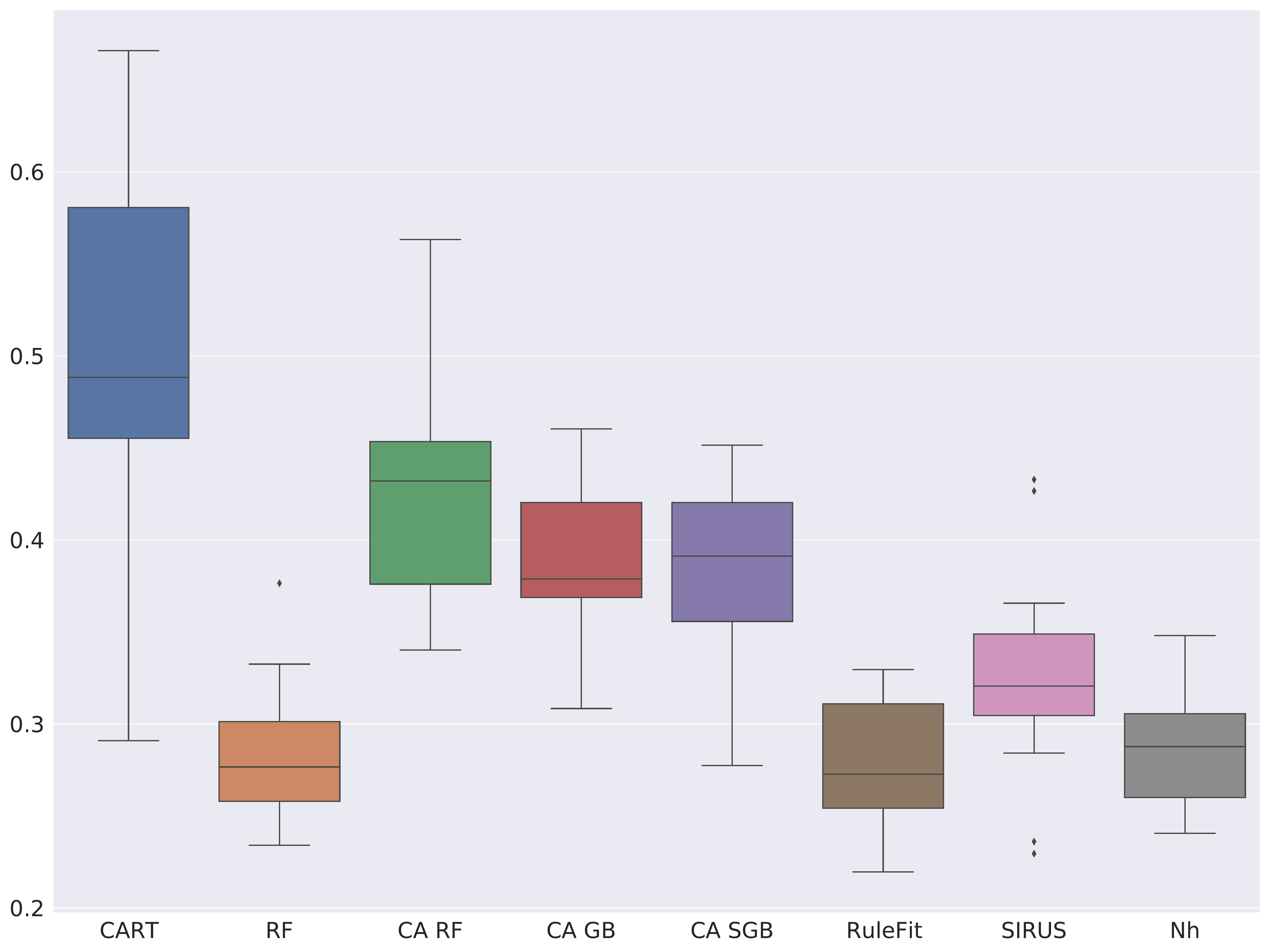}
		\caption{\label{fig:mse_ozone} Boxplots of the ratio mean squared error/empirical variance of the target (MSE) over  20  executions of algorithms for the \emph{Ozone} dataset.}
\end{figure}

One reason could be that rule generators of CA are random generators of rules that may not capture the good features at every run.  In particular we have identified the problem of predicting the empty cells of the  partition generated by the quasi-coverings. Then, by convention, $g_n(\x)=0$ for $\x \in A$ with $\Q_n(A)=0$. Thus, the accuracy of CA strongly depends on the coverage $\Q_n(A)$ of the sets $A\in \P(\C_n)$ which can be too small to be stable.  In Definition \ref{def:suitable_covering} of suitable data-dependent quasi-coverings only the coverage of the elements of $\C_n$ is controlled. The problem can be solved by considering a deterministic rule generator.

\section{Conclusion and perspectives}
In this paper, we provide a general framework for studying the consistency of rule-based interpretable estimators. We introduce the definition of a suitable quasi-covering. It is composed of two types of sets, namely the significant sets and the insignificant sets. The significant sets are considered as interpretable sets by construction. The insignificant ones are sets whose variance tends to zero. We provide a Covering Algorithm that extracts a suitable data-dependent quasi-covering from any rule generator. 

In Section \ref{sec:artif}, we run a Monte Carlo experiment on Covering Algorithm applied to Random Forest. We compare its results with those of Random Forest \citep{Breiman01} and RuleFit \citep{Friedman08}. This experiment shows that Covering Algorithm, which strives for interpretability, also identifies the support of the regression function.

In Section \ref{sec:real}, we apply Covering Algorithm to Random Forest \citep{Breiman01}, Gradient Boosting \citep{Friedman01} and Stochastic Gradient Boosting \citep{Friedman02} and we compare their results with those of CART \citep{CART}, Random Forest \citep{Breiman01}, RuleFit \citep{Friedman08}, Node harvest \citep{Meinshausen10} and SIRUS \citep{Benard20}. The loss of accuracy in the prediction is the cost of having an interpretable model according to our definition of interpretability. We broaden the accuracy-interpretability trade-off of classical algorithms by providing a much more interpretable method that remains consistent. 

Our methodology based on quasi-coverings is very effective in generating interpretable models. The use of tree ensembles such as RF, GB or SGB as rule generator is questionable; In Section \ref{sec:comments}, we pointed out the possible negative effect of the randomization procedure in combination with the ERM principle instead of averaging. We also noted that the choice of the rule generator has an important effect on accuracy, interpretability and stability. We are now looking for an algorithm that satisfies \ref{H:partitions} and that deterministically generates significant and insignificant rules that form a suitable sequence of data-dependent quasi-coverings.

In practice the variance $\sigma^2$ is unknown and has to be estimated efficiently enough.  
We let for future work the difficult question of defining a rule-based estimator of the variance with rate of convergence $O_\Pr(n^{\alpha-1/2})$ as required in Theorem \ref{th:consistance}. Our setting could also be broadened; unbounded $Y$ may be considered by introducing a truncation operator as in \cite{DistributionFree}; strong consistency and rates of convergence of the data-dependent covering estimators may be established under regularity conditions on $g^\ast$. Finally, the scope could be easily adapted from the regression setting to the classification setting by adapting the significant condition accordingly.

\section*{Appendix}

We gather here some proofs and provide the pseudo-code of the Covering Algorithm's selection process.\\

\subsection*{Proof of Proposition~\ref{lemma:PC}}	
	\begin{proof}
		By the definition of $\varphi_\C$, for any $\x\in\Rset^d$, $\varphi_\C^{-1}(\varphi_C(\x)) = \bigcap_{\r \in \C :  \x \in \r} \r \setminus \bigcup_{\r \in \C :  \x \notin \r} \r$.
		Thus
		\begin{align*}
			A\in\P(\C) = \varphi_\C^{-1}(Im (\varphi_\C)) &\Longleftrightarrow \exists \x\in\Rset^d / A = \varphi_\C^{-1}(\varphi_C(\x)) = \mathop{\bigcap_{\r \in \C}}_{\x \in \r} \r \setminus \mathop{\bigcup_{\r \in \C}}_{\x \notin \r} \r \\
			&\Longleftrightarrow \exists \tilde\C \subseteq \C / A =  \bigcap_{\r \in \tilde\C} \r \setminus \bigcup_{\r \in \C\setminus\tilde\C} \r \text{ and } A \neq \emptyset.
		\end{align*}
	\end{proof}

\subsection*{Proof of Proposition~\ref{prop:compPartitionCovering}}	

\begin{proof}
\begin{enumerate}
\item
\begin{itemize}
\item Lower bound: Consider the set $\mathcal E_1$ of the points $x\in\Rset^d$ whose coordinates are all zero except $x[i]=2$, for $1\leq i\leq d$ and  the set $\mathcal E_2$ of the points $x\in\Rset^d$ whose coordinates are all zero except $x[i]=1$ and $x[i+1]=-1$, for $1\leq i\leq d$ (with the convention $x[d+1]=x[1]$).

If $\P$ is a partition by hyperrectangles and $C\in\P$, then no pair of the $2d$ points of $\mathcal E_1\cup\mathcal E_2$ can be in the same element of $\P\setminus \{C\}$.

Indeed, suppose that $(x,y)\in\mathcal E_1^2$ (or analogously $\in\mathcal E_1^2$) lies in some $D\in\P\setminus \{C\}$, with $x[i]=2$ and $y[j]=2$. Since $R$ is an hyperrectangle, the point $z$ whose coordinates are all zero except $z[i]=\frac{x[i]+y[i]}{2}=1$ and $z[j]=\frac{x[j]+y[j]}{2}=1$ is in $R$ too. Thus $C$ and $R$ intersect. In the same way, suppose that $(x,y)\in\mathcal E_1\times\mathcal E_2$ lies in some $D\in\P\setminus \{C\}$, with $x[i]=2$, $y[j]=1$ and $y[j+1]=-1$. Then, the point $z$ whose coordinates are all zero except $z[i]=1$ (while $z[j]=x[j]=0$ and $z[j+1]=x[j+1]=0$) is in $R$ too. Thus $C$ and $R$ intersect.

Thus, the $2d$ points of $\mathcal E_1\cup\mathcal E_2$ induce $2d$ distinct elements of $\P$ that are added to $C$. 
\item Upper bound: The partition 
	\begin{align*}
		\P = &\bigl\{(-\infty,0)\, \times\, \Rset^{d-1} \,;\, (1,+\infty)\, \times\, \Rset^{d-1} \,; \\ 
		&\qquad [0,1] \times (-\infty,0)\, \times\, \Rset^{d-2} \,;\, [0,1] \times (1,+\infty) \,\times\, \Rset^{d-2} \,; \\
		&\qquad\quad \dots\,; \\
		&\qquad\quad\quad [0,1]^{d-1} \,\times\, (-\infty,0) \;; [0,1]^{d-1} \,\times\, (1,+\infty) \;; \\
		&\qquad\quad\quad\quad [0,1]^d \bigr\}
	\end{align*}
	 fulfills the conditions of the proposition.
\end{itemize}
\item The covering $\C = \bigl\{ [0,1]^d \,;\, \Rset^d \bigr\}$	fulfills the conditions of the proposition.
\end{enumerate}
\end{proof}

\subsection*{Proof of Proposition~\ref{prop:emp}}
For any $f : \S \rightarrow \mathbb R$ in $\L ^1(\Q)$ we note classically $\Q f := \int f d\Q$, $\Q_n f := \int f d\Q_n$.

\begin{proof}
	Let $\varepsilon >0$. 
	First, for any $f\in\F$ and $A\in\Br_n$, since $\Q_n(A) > 0$ and then $\Q(A) > 0$,
	\begin{align}
	& \left| \E_n \left[ f \mid A \right] - \E \left[f \mid A \right] \right| \notag\\
	&= \left| \frac{\int_{A} f d\Q_n}{\Q_n(A)} - \frac{\int_{A} f d\Q}{\Q(A)} \right| \notag\\
	&= \left| \frac{\Q(A) \left( \int_{A} f d\Q_n - \int_{A} f d\Q \right) + \left(\Q(A) - \Q_n(A)\right) \int_{A} f d\Q }{ \Q(A)\Q_n(A) }\right| \notag\\
	&\leq \left| \frac{\int_{A} f d\Q_n - \int_{A} f d\Q }{ \Q_n(A)} \right| + \left| \left( \Q(A) - \Q_n(A)\right) \frac{\int_{A} f d\Q }{ \Q(A)\Q_n(A)} \right|. \label{ineg:EspEmpTh}
	\end{align}
	
	Now, according to Proposition~\ref{prop:donsker},
	\begin{equation*}
	\sup \limits_{\tilde f \in \F, \tilde A \in B} \left| \int_{\tilde A} \tilde f d\Q_n - \int_{\tilde A} \tilde f d\Q \right| = O_{\Pr^*}(n^{-1/2}),
	\end{equation*}
	and
	\begin{equation*}
	\sup \limits_{\tilde A \in \Br} \left| \Q_n(\tilde A) - \Q(\tilde A) \right| = O_{\Pr^*}(n^{-1/2}). 
	\end{equation*}
	
	Thus, According to Remark~\ref{rem:as-tight-R}, there exists $M>0$ such that for any $n$ large enough,
	
	\begin{equation*}
	\Pr^* \left\{ \sup \limits_{\tilde f \in \F, \tilde A \in B} \left| \int_{\tilde A} \tilde f d\Q_n - \int_{\tilde A} \tilde f d\Q \right| > Mn^{-1/2} \right\} < \frac\varepsilon 2,
	\end{equation*} 
	and
	\begin{equation*}
	\Pr^* \left\{ \sup \limits_{\tilde A \in \Br} \left| \Q_n(\tilde A) - \Q(\tilde A) \right| > Mn^{-1/2} \right\} < \frac\varepsilon 2,
	\end{equation*} 
	so that $\Pr^*(\Omega_n) \geq 1 - \varepsilon$ with 	
	\begin{multline*}\label{def_omegan}
	\Omega_n := \Bigl\{ \sup \limits_{\tilde f \in \F, \tilde A \in B} \Bigl| \int_{\tilde A} \tilde f d\Q_n - \int_{\tilde A} \tilde f d\Q \Bigr| \leq Mn^{-1/2} \Bigr\} \bigcap \\ \Bigl\{ \sup \limits_{\tilde A \in B} \bigl| \Q_n(\tilde A) - \Q(\tilde A) \bigr| \leq Mn^{-1/2} \Bigr\}.
	\end{multline*}
	
	Then \eqref{ineg:EspEmpTh} yields, with $c := \sup \limits_{f \in \F, \x \in \S} |f(\x)| < \infty$ and since $\Q_n(A) \geq n^{-\alpha}$, for $n$ large enough, in the event $\Omega_n$,
	\begin{equation*}
	\sup \limits_{f \in \F, A \in \Br_n} \bigl| \E_n \left[ f \mid A \right] - \E \left[f \mid A \right] \bigr| \leq Mn^{\alpha-1/2} (1+c),
	\end{equation*}
	since $\frac{\int_{A} f d\Q}{\Q(A)} \leq c$.
	
	Finally, it has been proved that $\forall \varepsilon > 0, \exists M > 0, \exists N \in \mathbb N^* / \forall n \geq N,$
	\begin{equation*}
	\Pr^* \left\{ \sup \limits_{f \in \F, A \in \Br_n} \left| \E_n \left[ f \mid A \right] - \E \left[f \mid A \right] \right| > Mn^{\alpha-1/2} \right\} < \varepsilon,
	\end{equation*}
	and then $\forall \varepsilon > 0, \exists M > 0$ such that
	\begin{equation*}
	\limsup_{n\rightarrow\infty} \Pr^* \left\{ \sup \limits_{f \in \F, A \in \Br_n} \left| \E_n \left[ f \mid A \right] - \E \left[f \mid A \right] \right| > Mn^{\alpha-1/2} \right\} \leq \varepsilon,
	\end{equation*}
	which, together with Remark~\ref{rem:as-tight-R} again, proves the proposition. 
\end{proof}

\subsection*{Proof of Corollary \ref{prop:expectation} }
\begin{proof}[Proof of \eqref{eq:expectation}]
	Let $L=\essup Y$, $i\in\Nset,$ and $f_i\in \L^1(\Q)$ be defined by
	\begin{align*}
	f_i : \Rset^d \times [-L,L] & \to [-L^i,L^i] \\
	(\x, y) &\mapsto y^i.
	\end{align*}
	$f_i$ is bounded and $\{f_i\}$ is finite thus Donsker.	
	The result is then a straightforward application of Proposition~\ref{prop:emp}.
\end{proof}
\begin{proof}[Proof of \eqref{eq:variance}] 
	This part follows from Proposition~\ref{prop:emp} since $Y$ is bounded and \ $$\V_n\left[ Y \mid (\X , Y) \in A \right] := \E_n \left[ Y^2 \mid (\X , Y) \in A \right] - \E_n \left[ Y \mid (\X , Y) \in A \right]^2.$$
\end{proof}

\clearpage

\subsection*{Covering Algorithm selection process}
\begin{algorithm}
	\caption{Selection of minimal set of rules}
	\label{pseudocode}
	\SetAlgoLined
	\KwIn
	{
		\begin{itemize}
			\item the rate $0<\gamma<1$;
			\item a set of significant rules $S$;
			\item a set of insignificant rules $I$;
		\end{itemize}
	}
	\KwOut
	{
		\begin{itemize}\vspace{-0.2cm}
			\item a minimal set of rules $\C_n$;
		\end{itemize}
	}
	$\C_n \leftarrow \argmax_{\r \in S} \Q_n(\r)$\;
	$S \leftarrow S \setminus \C_n$\;
	\While{$\sum_{\r \in \C_n} \Q_n(\r) < 1$}
	{
		$\r^* \leftarrow \argmax_{\r \in S} \Q_n(\r)$\;
		\uIf{ $\Q_n(\r^* \cap \{\cup_{\r \in \C_n} \r\}) \le \gamma\,\Q_n(\r^*)$}
		{
			$\C_n \leftarrow \C_n \cup \r^*$\;
		}
		
		$S \leftarrow S \setminus \r^*$\;
		\uIf{$\#S = 0$}
		{
			Break \;
		}
	}

	\While{$\sum_{\r \in \C_n} \Q_n(\r) < 1$}
	{
		$\r^* \leftarrow \argmin_{\r \in I} \V_n(Y | \X \in \r)$\;
		\uIf{ $\Q_n(\r^* \cap \{\cup_{\r \in \C_n} \r\}) \le \gamma\,\Q_n(\r^*)$}
		{
			$\C_n \leftarrow \C_n \cup \r^*$\;
		}
			
		$I \leftarrow I \setminus \r^*$\;
		\uIf{$\#I = 0$}
		{
			Break \;
		}
	}
	\Return{} $\C_n$\;	
\end{algorithm}


\bibliographystyle{apalike}
\bibliography{biblio} 

\end{document}